\documentclass[12pt]{amsart}

\usepackage[utf8]{inputenc}
\usepackage[a4paper,nomarginpar]{geometry}
\usepackage[english]{babel}
\usepackage{enumitem}
\usepackage{amsmath,amsfonts,amssymb,amsthm,amscd}
\usepackage{tikz}

\geometry{lmargin=24mm, rmargin=24mm, tmargin=31mm, bmargin=30mm}

\allowdisplaybreaks

\makeatletter
\def\subsection{\@startsection{subsection}{2}%
    \z@{.5\linespacing\@plus.7\linespacing}{.3\linespacing}%
    {\normalfont\bfseries}}
\makeatother

\theoremstyle{theorem}
\newtheorem{theorem}{Theorem}

\newtheorem{proposition}[theorem]{Proposition}
\newtheorem{corollary}[theorem]{Corollary}

\theoremstyle{definition}
\newtheorem{definition}[theorem]{Definition}
\newtheorem{example}[theorem]{Example}
\newtheorem{remark}[theorem]{Remark}

\theoremstyle{remark} \theoremstyle{question} \theoremstyle{example}

\newcommand{\N}{\mathbb{N}}   
\newcommand{\Z}{\mathbb{Z}}   
\newcommand{\R}{\mathbb{R}}   
\newcommand{\C}{\mathbb{C}}   
\newcommand{\K}{\mathbb{K}}   

\newcommand{\cP}{\mathcal{P}}

\newcommand{\vertiii}[1]{{\left\vert\kern-0.25ex\left\vert\kern-0.25ex\left\vert #1 
    \right\vert\kern-0.25ex\right\vert\kern-0.25ex\right\vert}}

\newcommand{\eps}{\varepsilon}   
\newcommand{\wt} {\widetilde}    


\newcommand{\card}{\operatorname{card}}

\newcommand{\udens}{\operatorname{\overline{dens}}}

\begin{document}

\title[On notions of expansivity for operators on locally convex spaces]
{On notions of expansivity for operators\\ on locally convex spaces}

\subjclass[2020]{Primary 37B05; Secondary 46A45, 47B37, 47A16.}
\keywords{Linear operators, locally convex spaces, expansivity, average expansivity, uniform expansivity, weighted shifts}
\date{}
\dedicatory{}
\maketitle

\begin{center}
{\sc Nilson C. Bernardes Jr.}$^\text{a,b}$, \ {\sc F\'elix Mart\'inez-Gim\'enez}$\,^\text{a}$,\\ {\sc and Francisco Rodenas}$\,^\text{a}$
\end{center}

\bigskip\bigskip
{\small\noindent
$^\text{a}$ {\it Institut Universitari de Matem\`atica Pura i Aplicada, Universitat Polit\`ecnica de Val\`encia,
Cam\'i de Vera S/N, Edifici 8E, Acces F, 4a Planta, Val\`encia, 46022, Spain.}

\smallskip\noindent
$^\text{b}$ {\it Departamento de Matem\'atica Aplicada, Instituto de Matem\'atica, Universidade Federal do Rio de Janeiro,
Caixa Postal 68530, Rio de Janeiro, 21945-970, Brazil.}

\smallskip\noindent
{\it e-mails:} ncbernardesjr@gmail.com, fmartinez@mat.upv.es, frodenas@mat.upv.es.}

\bigskip

\begin{abstract}
We extend the concept of average expansivity for operators on Banach spaces to operators on arbitrary locally convex spaces.
We obtain complete characterizations of the average expansive weighted shifts on Fr\'echet sequence spaces.
Moreover, we give a partial answer to a problem proposed by Bernardes et al.\ in J. Funct.\ Anal.\ {\bf 288} (2025), Paper No.\ 110696,
by obtaining complete characterizations of the uniformly expansive weighted shifts on K\"othe sequence spaces.
Some general properties of the various concepts of expansivity in linear dynamics and several concrete examples are also presented.
\end{abstract}


\section{Introduction}

Given a metric space $M$ with metric $d$, recall that a homeomorphism $h : M \to M$ is said to be {\em expansive} 
if there is a constant $c > 0$ (called an {\em expansivity constant} for $h$) such that for any pair $x,y$ of distinct points in $M$, 
there exists $n \in \Z$ satisfying
\[
d(h^n(x),h^n(y)) > c.
\]
This concept was introduced by Utz~\cite{WUtz50} in 1950. It has played an important role in the modern theory of dynamical systems,
having many applications and connections to other fundamental concepts in dynamics.
We refer the reader to the book~\cite{NAokKHir94} for a nice exposition about this concept.

In the setting of linear dynamics, Eisenberg~\cite{MEis66} proved that an invertible operator on $\C^n$ is expansive if and only if it has no eigenvalue on the unit circle. 
A little later, in the early 1970s, Eisenberg and Hedlund~\cite{MEisJHed70} and Hedlund~\cite{JHed71} studied the concept of expansivity for operators on Banach spaces. 
They introduced the concept of uniform expansivity and proved that an invertible operator on a Banach space is uniformly expansive if and only if its approximate point spectrum does not intersect the unit circle. 
Later, in 2000, Mazur~\cite{MMaz00} proved that an invertible normal operator $T$ on a Hilbert space is expansive if and only if $T^*T$ has no eigenvalue on the unit circle. 
This subject was revisited by Bernardes et al.~\cite{BerCirDarMesPuj18} in 2018.
Among the many results contained in~\cite{BerCirDarMesPuj18}, we can find complete characterizations of the various notions of expansivity for weighted shifts on classical Banach sequence spaces, the fact that uniformly expansive operators on Banach spaces are never Li-Yorke chaotic, and the fact that an invertible operator on a Banach space is uniformly positively expansive if and only if its approximate point spectrum does not intersect the closed unit disc. 
Subsequently, it was proved that an invertible operator on a Banach space is hyperbolic if and only if it is expansive and has the shadowing property~\cite{NBerAMes21,CirGolPuj21}.
Alves et al.~\cite{AlvBerMes21} introduced the concept of average expansivity for operators on Banach spaces and characterized the average expansive weighted shifts on classical Banach sequence spaces.
Some additional results on expansive operators can be found in the recent papers~\cite{AniDarMai22,KLeeCMor22,MMai22}.
Finally, Bernardes et al.~\cite{BCDFP} have recently extended the concepts of expansivity and uniform expansivity to operators on locally convex spaces and initiated investigations on these concepts in this much more general setting.

Our goal in this note is to complement these previous works on expansivity in the setting of linear dynamics, 
especially the papers~\cite{AlvBerMes21,BCDFP,BerCirDarMesPuj18}.

In Section~\ref{S-AE} we extend the concept of average expansivity for operators on Banach spaces \cite{AlvBerMes21} 
to operators on arbitrary locally convex spaces (Definition~\ref{AE-D1}).
Our main result in this section is a complete characterization of the average expansive weighted shifts 
on Fr\'echet sequence spaces (Theorem~\ref{AE-T2}).
Applications to an important special class of Fr\'echet sequence spaces, 
known as K\"othe sequence spaces, are also provided (Corollary~\ref{AE-C1} and Remark~\ref{AE-R1}). 
Due to the importance of weighted shifts in Operator Theory, 
the dynamics of these operators has been extensively studied by many authors.
The classical references are \cite{KGro00} and \cite{HSal95}.
Recent contributions to this theme include \cite{NBerAMes21} and \cite{NBerAPer2024}, where additional references can be found.
In the special case of K\"othe sequence spaces, we refer the reader to \cite{BCDFP,NBerAPer2024,NBerFVas25,FMarAPer02,XWuPZhu13},
for instance.
Although uniformly expansive operators on locally convex spaces are neither Li-Yorke chaotic nor topologically transitive
\cite{BCDFP}, we show that there exist average expansive weighted shifts on classical Banach sequence spaces
that are simultaneously distributionally chaotic and topologically transitive (Theorem~\ref{AE-T3}).

In Section~\ref{S-UE-K} we address a problem proposed by Bernardes et al.\ \cite{BCDFP}.
Complete characterizations of the notions of expansivity and uniform expansivity for weighted shifts
on classical Banach sequence spaces were obtained by Bernardes et al.\ \cite{BerCirDarMesPuj18}.
For the notion of expansivity, the characterization was generalized to weighted shifts on Fr\'echet sequence spaces in \cite{BCDFP},
but the following problem was left open:
{\it Characterize the concept of uniform expansivity for weighted shifts on Fr\'echet sequence spaces} \cite[Problem~E]{BCDFP}.
We present here a partial answer to this problem by obtaining complete characterizations of the uniformly expansive
weighted shifts on K\"othe sequence spaces (Theorem~\ref{UEWS-T1}, Remarks~\ref{UEWS-R2} and~\ref{UEWS-R3}).
Moreover, contrary to what happens in the case of Banach spaces, we give an example of a weighted shift on the K\"othe sequence space
$s(\Z)$ of all rapidly decreasing sequences whose trajectories do not grow exponentially fast (Example~\ref{UEWS-E2}).

In Section~\ref{S-Gen} we present properties of the various notions of expansivity related to inverses, rotations, powers, products,
direct sums and restrictions (Propositions~\ref{Gen-P1}, \ref{Gen-P2} and~\ref{Gen-P3}).
This section is elementary in nature and is included here for the purpose of organizing these basic facts for future reference 
and for the sake of completeness.


\section{Average expansive operators on locally convex spaces}\label{S-AE}

Throughout $\K$ denotes either the field $\R$ of real numbers or the field $\C$ of complex numbers, $\Z$ denotes the ring of integers, 
$\N$ denotes the set of all positive integers, $-\N$ denotes the set of all negative integers, and $\N_0 = \N \cup \{0\}$.

Given a seminorm $\|\cdot\|$ on a vector space $X$, the {\em unit sphere} of $\|\cdot\|$ is defined by
\[
S_{\|\cdot\|} = \{x \in X : \|x\| = 1\}.
\]
If $X$ is a normed space with norm $\|\cdot\|$, then we also write $S_X$ instead of $S_{\|\cdot\|}$.

Let $X$ be a locally convex space over $\K$ (all spaces are assumed to be Hausdorff) and choose a family 
$(\|\cdot\|_\alpha)_{\alpha \in I}$ of seminorms that induces its topology.
We denote by $L(X)$ the set of all continuous linear operators (simply, {\em operators}) on $X$
and by $GL(X)$ the set of those operators that have a continuous inverse.

Recall that an operator $T \in GL(X)$ is said to be {\em topologically expansive} if the following condition holds:
\begin{itemize}
\item [(E)] For each $x \in X \backslash \{0\}$, there exists $\alpha \in I$ such that $\sup_{n \in \Z} \|T^n x\|_\alpha = \infty$.
\end{itemize}
Moreover, $T$ is said to be {\em uniformly topologically expansive} if:
\begin{itemize}
\item [(UE)] For each $\alpha \in I$, there exists $\beta \in I$ such that we can write 
$S_{\|\cdot\|_\alpha} = S^+_\alpha \cup S^-_\alpha$, where
\[
\|T^n x\|_\beta \to \infty \text{ uniformly on } S^+_\alpha \text{ as } n \to \infty
\]
and
\[
\|T^{-n} x\|_\beta \to \infty \text{ uniformly on } S^-_\alpha \text{ as } n \to \infty.
\]
\end{itemize}

These concepts were introduced in \cite{BCDFP} as generalizations of the classical notions of expansivity and uniform expansivity for operators on Banach spaces (see \cite{MEisJHed70}, for instance).
The word ``topologically'' was used to distinguish the above concept of expansivity from the usual notion of expansivity in the metric space setting, since they may differ in the case of a metrizable locally convex space (or even a Fr\'echet space) endowed with a compatible invariant metric \cite[Example~31]{BCDFP}, although they always coincide in normed spaces.
However, in the present note we will omit the word ``topologically'' and will write simply ``expansive'' and ``uniformly expansive''
for the above concepts.

A concept of average expansivity for operators on Banach spaces was introduced and investigated in \cite{AlvBerMes21}.
Below we generalize it by means of the following definition.

\begin{definition}\label{AE-D1}
Let $X$ be a locally convex space over $\K$ whose topology is induced by a family $(\|\cdot\|_\alpha)_{\alpha \in I}$ of seminorms.
We say that $T \in GL(X)$ is {\em average expansive} if the following condition holds:
\begin{itemize}
\item [(AE)] For each $x \in X \backslash \{0\}$, there exists $\alpha \in I$ such that
\[
\sup_{n \in \N} \Big(\frac{1}{2n+1} \sum_{j=-n}^n \|T^j x\|_\alpha \Big) = \infty.
\]
\end{itemize}
\end{definition}

We observe that this concept does not depend on the choice of the family of seminorms inducing the topology of $X$. 
The same is true for the concepts of expansivity and uniform expansivity, as observed in~\cite{BCDFP}.
In particular, we can take the family of all continuous seminorms on $X$ if we wish so.
This also shows that these notions are completely determined by the topology of the locally convex space $X$.

\begin{proposition}\label{AE-P1}
For every $T \in GL(X)$,
\[
T \text{ is uniformly expansive } \Longrightarrow \ T \text{ is average expansive } \Longrightarrow \ T \text{ is expansive}.
\]
\end{proposition}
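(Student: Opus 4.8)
The plan is to prove the two implications separately, working directly from the definitions (UE), (AE), (E).

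For the implication ``uniformly expansive $\Longrightarrow$ average expansive'', I would fix $x \in X \setminus \{0\}$ and aim to produce a seminorm $\alpha$ along which the averages blow up. Since $x \neq 0$ and the space is Hausdorff, there is some seminorm $\alpha_0$ with $\|x\|_{\alpha_0} > 0$; after rescaling I may assume $\|x\|_{\alpha_0} = 1$, so that $x \in S_{\|\cdot\|_{\alpha_0}}$. Apply (UE) to $\alpha_0$ to obtain $\beta$ and the splitting $S_{\|\cdot\|_{\alpha_0}} = S^+_{\alpha_0} \cup S^-_{\alpha_0}$. Then $x$ lies in $S^+_{\alpha_0}$ or $S^-_{\alpha_0}$; in the first case $\|T^n x\|_\beta \to \infty$ as $n \to \infty$, so in particular the terms $\|T^j x\|_\beta$ with $j > 0$ are eventually arbitrarily large, and a single such large term already forces the average $\frac{1}{2n+1}\sum_{j=-n}^n \|T^j x\|_\beta$ to be large for the appropriate $n$. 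The clean way to see this is that $\sup_{n} \frac{1}{2n+1}\sum_{j=-n}^n a_j = \infty$ whenever $a_j \geq 0$ and $\limsup_{j} a_j = \infty$; indeed if $a_{j_k} \to \infty$ along some subsequence then choosing $n = j_k$ gives a summand $\frac{a_{j_k}}{2j_k+1}$, which is not by itself enough, so I must be slightly more careful. The correct elementary fact is: if $a_j \to \infty$ as $j \to +\infty$ then $\frac{1}{2n+1}\sum_{j=-n}^n a_j \to \infty$, because the Cesàro-type average of a sequence tending to $+\infty$ also tends to $+\infty$ (the one-sided sum $\frac{1}{2n+1}\sum_{j=0}^n a_j$ already diverges). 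This settles the $S^+$ case with $\alpha = \beta$; the $S^-$ case is symmetric, using $\|T^{-n}x\|_\beta \to \infty$ and reindexing.

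For the implication ``average expansive $\Longrightarrow$ expansive'', I would again fix $x \in X \setminus \{0\}$ and use (AE) to get a seminorm $\alpha$ with $\sup_{n} \frac{1}{2n+1}\sum_{j=-n}^n \|T^j x\|_\alpha = \infty$. The goal is to show $\sup_{n \in \Z} \|T^n x\|_\alpha = \infty$ for this same $\alpha$, which gives (E). This is immediate by contraposition: if $\|T^j x\|_\alpha \leq M$ for all $j \in \Z$, then each average is bounded by $M$, contradicting that the supremum of the averages is infinite. Hence the sequence $(\|T^j x\|_\alpha)_{j \in \Z}$ is unbounded, which is exactly condition (E).

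The routine character of both arguments means the only genuine point requiring care is the elementary real-analysis lemma that a nonnegative sequence tending to $+\infty$ (on one side) has divergent symmetric Cesàro averages; I expect this to be the ``hard part'' only in the sense of stating it precisely, and it can be dispatched in one or two lines. I would present the proof compactly, handling the $S^+$ and $S^-$ alternatives in parallel and then giving the short contrapositive for the second implication.
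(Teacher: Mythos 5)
Your proof is correct and follows essentially the same route as the paper's: the second implication via the observation that the symmetric averages are dominated by $\sup_{n\in\Z}\|T^nx\|_\alpha$ (your contrapositive is just this inequality), and the first via normalizing $x$, applying (UE) to place $x$ in $S^+_{\alpha_0}$ or $S^-_{\alpha_0}$, and invoking the Ces\`aro-type fact that a nonnegative sequence tending to $\infty$ on one side has divergent symmetric averages --- which is exactly the paper's estimate $\sup_{n>n_k} k(n-n_k)/(2n+1) = k/2$. Your mid-proof self-correction was the right call: the $\limsup$ version of the lemma is genuinely false, but (UE) supplies the full limit, so the argument as finally stated is sound.
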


\begin{proof}
The second implication follows from the inequality
\[
\sup_{n \in \N} \Big(\frac{1}{2n+1} \sum_{j=-n}^n \|T^j x\|_\alpha \Big) \leq \sup_{n \in \Z} \|T^nx\|_\alpha.
\]

In order to prove the first implication, suppose that the operator $T$ is uniformly expansive. 
Given $x \in X \backslash \{0\}$, there exists $\alpha \in I$ such that $\|x\|_\alpha \neq 0$.
Let $\beta \in I$ be associated to $\alpha$ according to the definition of uniform expansivity. Then
\[
\lim_{n \to \infty} \|T^n x\|_\beta = \infty \ \ \text{ or } \ \ \lim_{n \to \infty} \|T^{-n}x\|_\beta = \infty.
\]
Let us assume the first possibility (the argument for the second one is analogous). 
Then, for each $k \in \N$, there exists $n_k \in \N$ such that $\|T^n x\|_\beta \geq k$ whenever $n \geq n_k$. Hence,
\[
\sup_{n \in \N} \Big( \frac{1}{2n+1} \sum_{j=-n}^n \|T^j x\|_\beta \Big)
\geq \sup_{n > n_k} \Big(\frac{1}{2n+1} \sum_{j=n_k}^n \|T^j x\|_\beta \Big)
\geq \sup_{n > n_k} \frac{k(n-n_k)}{2n+1}
= \frac{k}{2}\cdot
\]
Since $k \in \N$ is arbritary, we conclude that $T$ is average expansive.
\end{proof}

Our next goal is to characterize the average expansive weighted shifts on Fr\'echet sequence spaces. For this purpose,
recall that a {\em Fr\'echet sequence space over $\Z$} is a Fr\'echet space $X$ which is a vector subspace of the product space 
$\K^\Z$ such that the inclusion map $X \to \K^\Z$ is continuous, that is, convergence in $X$ implies coordinatewise convergence.
Given a sequence $w = (w_j)_{j \in \Z}$ of nonzero scalars, it follows from the closed graph theorem that
the {\em bilateral weighted backward shift}
\[
B_w((x_j)_{j \in \Z}) = (w_{j+1}x_{j+1})_{j \in \Z}
\]
is a continuous linear operator on $X$ as soon as it maps $X$ into itself.
The same is true for the {\em bilateral weighted forward shift}
\[
F_w((x_j)_{j \in \Z}) = (w_{j-1}x_{j-1})_{j \in \Z}.
\]
The {\em canonical vectors} of $\K^\Z$ are the vectors $e_j$, $j \in \Z$, where the $j^\text{th}$ coordinate of $e_j$ is $1$
and the other coordinates of $e_j$ are $0$.
The sequence $(e_j)_{j \in \Z}$ is a {\em basis} of $X$ if each $e_j$ belongs to $X$ and
\[
x = \sum_{j=-\infty}^\infty x_j e_j \ \ \text{ for all } x = (x_j)_{j \in \Z} \in X.
\]

If we replace $\Z$ by $\N$ in the previous paragraph, then we obtain the concept of a {\em Fr\'echet sequence space} $X$,
the {\em unilateral weighted backward shift}
\[
B_w(x_1,x_2,x_3,\ldots) = (w_2x_2,w_3x_3,w_4x_4,\ldots),
\]
and the {\em unilateral weighted forward shift}
\[
F_w(x_1,x_2,x_3,\ldots) = (0,w_1x_1,w_2x_2,w_3x_3,\ldots).
\]
As before, $B_w$ (resp.\ $F_w$) is a continuous linear operator on $X$ as soon as it maps $X$ into itself.
By abuse of language, we also denote the {\em canonical vectors} of $\K^\N$ by $e_j$, $j \in \N$.

\begin{theorem}\label{AE-T1}
Suppose that $X$ is a Fréchet sequence space over $\Z$ in which the sequence $(e_j)_{j \in \Z}$ of canonical vectors is a basis,
$(\|\cdot\|_k)_{k \in \N}$ is an increasing sequence of seminorms that induces the topology of $X$,
and the bilateral backward shift 
\[
B : (x_j)_{j \in \Z} \in X \mapsto (x_{j+1})_{j \in \Z} \in X
\]
is an invertible operator on $X$. Then $B$ is average expansive if and only if there exists $k \in \N$ such that
\begin{equation}\label{AE-F1}
\sup_{n \in \N} \frac{\|e_{-1}\|_k + \cdots + \|e_{-n}\|_k}{n} = \infty \ \ \text{ or } \ \
\sup_{n \in \N} \frac{\|e_1\|_k + \cdots + \|e_n\|_k}{n} = \infty.
\end{equation}
\end{theorem}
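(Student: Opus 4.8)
The plan is to prove the two implications separately, after recording the elementary identities $Be_m=e_{m-1}$, hence $B^je_m=e_{m-j}$, and, for $x=(x_\ell)_{\ell\in\Z}\in X$, the coordinate formula $(B^jx)_\ell=x_{\ell+j}$.

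For the direction ``$B$ average expansive $\Rightarrow$ \eqref{AE-F1}'', I would simply feed the single vector $x=e_0$ into condition (AE). Since $B^je_0=e_{-j}$, this produces an index $k$ with $\sup_{n}\frac{1}{2n+1}\sum_{j=-n}^n\|e_{-j}\|_k=\infty$. Reindexing $\ell=-j$ and splitting the symmetric sum as $\|e_0\|_k+\sum_{\ell=1}^n\|e_\ell\|_k+\sum_{\ell=1}^n\|e_{-\ell}\|_k$, the constant term is negligible and $\frac{1}{2n+1}\le\frac1n$, so at least one of the two one-sided Ces\`aro means must be unbounded; this is exactly \eqref{AE-F1} for that $k$. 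This direction is routine.

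The substance is the converse. Fix $x\ne0$ and pick $m_0$ with $x_{m_0}\ne0$. The mechanism I would use is the coordinate projection $\pi_\ell(y)=y_\ell e_\ell$, together with the observation $\pi_{m_0-j}(B^jx)=x_{m_0}\,e_{m_0-j}$ (which holds because $(B^jx)_{m_0-j}=x_{m_0}$). The crucial input is that the family $\{\pi_\ell:\ell\in\Z\}$ is \emph{equicontinuous}: for the index $k$ supplied by \eqref{AE-F1} there are an index $k'$ and a constant $C>0$ with $\|\pi_\ell(y)\|_k\le C\|y\|_{k'}$ for \emph{all} $\ell$ and all $y\in X$. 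Applying this with $y=B^jx$ and $\ell=m_0-j$ yields the uniform lower bound
\[
\|B^jx\|_{k'}\ \ge\ \frac{|x_{m_0}|}{C}\,\|e_{m_0-j}\|_k\qquad(j\in\Z).
\]
Averaging over $-n\le j\le n$ and reindexing $\ell=m_0-j$ gives $\frac{1}{2n+1}\sum_{j=-n}^n\|B^jx\|_{k'}\ge\frac{|x_{m_0}|}{C}\cdot\frac{1}{2n+1}\sum_{\ell=m_0-n}^{m_0+n}\|e_\ell\|_k$, and \eqref{AE-F1} forces the right-hand side to have infinite supremum over $n$ (the symmetric window eventually contains $\{1,\dots,n\}$ or $\{-1,\dots,-n\}$, and $\frac{1}{2n+1}\ge\frac{1}{3n}$). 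Hence (AE) holds for $x$ with $\alpha=k'$, and since $x$ was an arbitrary nonzero vector, $B$ is average expansive.

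The main obstacle is the equicontinuity of the coordinate projections, \emph{uniformly} in $\ell\in\Z$: this is precisely what allows a single source seminorm $\|\cdot\|_{k'}$ to control $\|B^jx\|$ simultaneously for all $j$. It cannot be extracted merely from the assumed continuity of each individual coordinate functional, nor by conjugating a single projection through powers of $B$ (since $\pi_{m_0-j}=B^j\pi_{m_0}B^{-j}$ reintroduces the uncontrolled growth of $B^j$). Instead I would invoke the classical fact that the partial-sum projections of a Schauder basis of a Fr\'echet space form an equicontinuous family (a Banach--Steinhaus argument, the space being Baire), whence each $\pi_\ell$, being a difference of two consecutive such projections, belongs to an equicontinuous family. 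Once this is in hand, the remaining work is the routine bookkeeping indicated above, namely splitting the symmetric sums into one-sided ones and absorbing the fixed shift by $m_0$.
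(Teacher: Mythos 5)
Your proposal is correct and follows essentially the same route as the paper's proof: the forward direction tests condition (AE) on the single vector $e_0$, whose $B$-orbit runs through the canonical vectors, and the converse rests on the Banach--Steinhaus equicontinuity of the coordinate projections $x \mapsto x_\ell e_\ell$ (available because $(e_j)_{j\in\Z}$ is a basis of the Fr\'echet space $X$), which is exactly the paper's key step. The only difference is presentational: the paper argues both implications contrapositively, while you argue them directly, but the underlying inequalities and the bookkeeping absorbing the shift by $m_0$ coincide.
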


\begin{proof}
Suppose that, for each $k \in \N$, there is a constant $C_k \in (0,\infty)$ such that
\[
\frac{\|e_{-1}\|_k + \cdots + \|e_{-n}\|_k}{n} \leq C_k \ \text{ and } \
\frac{\|e_1\|_k + \cdots + \|e_n\|_k}{n} \leq C_k \ \text{ for all } n \in \N.
\]
Then,
\[
\frac{1}{2n+1} \sum_{j=-n}^n \|B^j e_0\|_k \leq \frac{2 C_k n + \|e_0\|_k}{2n+1} \to C_k \ \text{ as } n \to \infty,
\]
showing that condition (AE) fails for the vector $e_0$.

Conversely, suppose that there is a vector $y = (y_j)_{j \in \Z} \in X \backslash \{0\}$ such that
\[
C_k = \sup_{n \in \N} \Big(\frac{1}{2n+1} \sum_{j=-n}^n \|B^j y\|_k \Big) < \infty \ \ \text{ for all } k \in \N.
\]
Since $(e_n)_{n \in \Z}$ is a basis of $X$, the Banach-Steinhaus theorem ensures that the family of operators
\[
x \in X \mapsto x_j e_j \in X \ \ \ \ (j \in \Z)
\]
is equicontinuous. Hence, for each $k \in \N$, there exist $D_k \in (0,\infty)$ and $m_k \in \N$ such that
\[
\|x_je_j\|_k \leq D_k \|x\|_{m_k} \ \ \text{ whenever } x \in X \text{ and } j \in \Z.
\]
Since $y \neq 0$, there exists $i \in \Z$ such that $y_i \neq 0$.
If $n > |i|$ and $t_n = \max\{n-i,n+i\}$, then
\begin{align*}
\frac{1}{n} \sum_{j=-n}^{n} \|e_j\|_k 
&= \frac{1}{n|y_i|} \sum_{j=-n}^{n} \|y_ie_j\|_k 
  \leq \frac{D_k}{n|y_i|} \sum_{j=-n}^{n} \|B^{j+i}y\|_{m_k}
  \leq \frac{D_k}{n|y_i|} \sum_{j=-t_n}^{t_n} \|B^jy\|_{m_k}\\
&\leq \frac{C_{m_k}D_k}{|y_i|} \cdot \frac{2t_n + 1}{n}
  \ \to \ \frac{2C_{m_k}D_k}{|y_i|} \ \text{ as } n \to \infty.
\end{align*}
Thus,
\[
\sup_{n \in \N} \Big(\frac{1}{n} \sum_{j=-n}^{n} \|e_j\|_k\Big) < \infty \ \ \text{ for all } k \in \N,
\]
which implies that (\ref{AE-F1}) is false.
\end{proof}

The previous theorem can be generalized to bilateral weighted backward shifts as follows.

\begin{theorem}\label{AE-T2}
Suppose that $X$ is a Fréchet sequence space over $\Z$ in which the sequence $(e_j)_{j \in \Z}$ of canonical vectors is a basis,
$(\|\cdot\|_k)_{k \in \N}$ is an increasing sequence of seminorms that induces the topology of $X$,
$w = (w_j)_{j \in \Z}$ is a sequence of nonzero scalars, and the bilateral weighted backward shift 
\[
B_w : (x_j)_{j \in \Z} \in X \mapsto (w_{j+1} x_{j+1})_{j \in \Z} \in X
\]
is an invertible operator on $X$. Then $B_w$ is average expansive if and only if there exists $k \in \N$ such that
\begin{equation}\label{AE-F2}
\sup_{n \in \N} \Big(\frac{1}{n} \sum_{j=1}^n |w_{-j+1} \cdots w_0| \|e_{-j}\|_k\Big) = \infty \ \ \text{ or } \ \
\sup_{n \in \N} \Big(\frac{1}{n} \sum_{j=1}^n \frac{\|e_j\|_k}{|w_1 \cdots w_j|}\Big) = \infty.
\end{equation}
\end{theorem}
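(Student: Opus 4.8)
The plan is to reduce Theorem~\ref{AE-T2} to the unweighted case already settled in Theorem~\ref{AE-T1} by conjugating $B_w$ to the plain backward shift $B$ through a diagonal operator. Since $B_w e_j = w_j e_{j-1}$, I would look for a diagonal operator $D e_j = d_j e_j$ (with nonzero scalars $d_j$) acting coordinatewise on $\K^\Z$ and satisfying $D B_w D^{-1} = B$, where $B$ is the unweighted shift of Theorem~\ref{AE-T1}. A direct computation on the canonical vectors forces $d_j = w_j\, d_{j-1}$, so, normalizing $d_0 = 1$, one obtains $d_j = w_1 \cdots w_j$ for $j \geq 1$ and $d_{-j} = (w_0 w_{-1} \cdots w_{-j+1})^{-1}$ for $j \geq 1$.

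Next I would introduce the space $Y = D(X)$ equipped with the transported seminorms $\|y\|'_k = \|D^{-1}y\|_k$, so that $D : X \to Y$ becomes an isomorphism of locally convex spaces. I would then verify that $Y$ inherits every hypothesis of Theorem~\ref{AE-T1}: the inclusion $Y \hookrightarrow \K^\Z$ remains continuous because $D$ acts diagonally; the seminorms $(\|\cdot\|'_k)_{k \in \N}$ are increasing and induce the topology of $Y$; the canonical vectors $(e_j)_{j \in \Z}$ form a basis of $Y$ (expand $D^{-1}y$ in the basis of $X$ and apply the continuous operator $D$); and the plain shift $B = D B_w D^{-1}$ is invertible on $Y$, being conjugate to the invertible operator $B_w$.

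The conjugacy then transfers average expansivity directly: for $x \in X$ and $y = Dx$ one has $\|B^j y\|'_k = \|D^{-1} B^j D x\|_k = \|B_w^j x\|_k$, so condition~(AE) holds for $B_w$ on $X$ precisely when it holds for $B$ on $Y$. Applying Theorem~\ref{AE-T1} to $B$ on $Y$, average expansivity is equivalent to the existence of $k \in \N$ with
\[
\sup_{n \in \N} \frac{\|e_{-1}\|'_k + \cdots + \|e_{-n}\|'_k}{n} = \infty
\quad \text{or} \quad
\sup_{n \in \N} \frac{\|e_1\|'_k + \cdots + \|e_n\|'_k}{n} = \infty.
\]
Substituting $\|e_j\|'_k = |d_j|^{-1}\|e_j\|_k$ and using $|d_j| = |w_1 \cdots w_j|$ together with $|d_{-j}|^{-1} = |w_{-j+1} \cdots w_0|$ recovers exactly condition~(\ref{AE-F2}), which completes the reduction.

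The routine verifications---continuity of the inclusion, the basis property, and the increasing character of the transported seminorms---are immediate once $D$ is in place. The only point demanding genuine care, and where I expect the main (though modest) effort to lie, is confirming that $(Y, (\|\cdot\|'_k)_k)$ is a bona fide Fréchet sequence space over $\Z$ to which Theorem~\ref{AE-T1} legitimately applies, that is, that $D$ is an isomorphism \emph{onto} $Y$ with its transported topology, and not merely an algebraic bijection of coordinate spaces.
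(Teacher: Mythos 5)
Your proposal is correct and takes essentially the same route as the paper: the paper conjugates $B_w$ to the unweighted shift $B$ via the diagonal map $\phi_v(x) = (v_jx_j)_{j\in\Z}$ with $v_j = 1/d_j$ (so your $D$ is $\phi_v^{-1}$ and your $Y$ is the paper's $X_v$), transfers the seminorms exactly as you do, and then invokes Theorem~\ref{AE-T1}. The only difference is cosmetic, namely the direction in which the conjugating diagonal operator is written.
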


\smallskip
This theorem follows from the previous one by means of a suitable conjugacy.
The method can be found in \cite[Section~4.1]{KGroAPer11}, but we will recall it here briefly for the sake of completeness.
Consider the weights
\[
v_0 = 1, \ \ \ \ v_{-j} = w_{-j+1} \cdots w_0 \ \ \text{ and } \ \ v_j = \frac{1}{w_1 \cdots w_j} \ \text{ for } j \geq 1,
\]
the vector space
\[
X_v = \{(x_j)_{j \in \Z} \in \K^\Z : (v_j x_j)_{j \in \Z} \in X\},
\]
and the vector space isomorphism
\[
\phi_v : (x_j)_{j \in \Z} \in X_v \mapsto (v_j x_j)_{j \in \Z} \in X.
\]
Use $\phi_v$ to transfer the seminorms of $X$ to $X_v$:
\[
\|(x_j)_{j \in \Z}\|'_k = \|\phi_v((x_j)_{j \in \Z})\|_k \ \ \text{ for } (x_j)_{j \in \Z} \in X_v.
\]
Endowed with the topology induced by the sequence $(\|\cdot\|'_k)_{k \in \N}$ of seminorms,
$X_v$ is a Fr\'echet sequence space in which $(e_j)_{j \in \Z}$ is a basis.
Since $B_w \circ \phi_v = \phi_v \circ B$, we have that $B_w$ is average expansive if and only if so is $B$.
Thus, Theorem~\ref{AE-T2} follows from Theorem~\ref{AE-T1} applied to $X_v$.

An important class of Fr\'echet sequence spaces is that of K\"othe sequence spaces. Let $J = \N$ or~$\Z$. 
Recall that a {\em K\"othe matrix} ({\em on $J$}) is a family of scalars of the form 
$A = (a_{j,k})_{j \in J,k \in \N}$ such that the following properties hold for each $j \in J$:
\begin{itemize}
\item $0 \leq a_{j,1} \leq a_{j,2} \leq \cdots \leq a_{j,k} \leq a_{j,k+1} \leq \cdots$.
\item $a_{j,k} > 0$ for some $k \in \N$. 
\end{itemize}
Given a real number $p \in \{0\} \cup [1,\infty)$ and such a K\"othe matrix $A$, 
the associated {\em K\"othe sequence space} $\lambda_p(A,J)$ is the Fr\'echet sequence space defined as follows:
\begin{itemize}
\item If $p \in [1,\infty)$, then 
    \[
    \lambda_p(A,J) = \Big\{(x_j)_{j \in J} \in \K^J : \sum_{j \in J} |a_{j,k} x_j|^p < \infty \text{ for all } k \in \N\Big\}
    \] 
    endowed with the seminorms
    \[
    \|x\|_k = \Big(\sum_{j \in J} |a_{j,k} x_j|^p\Big)^\frac{1}{p} \ \text{ for } x = (x_j)_{j \in J} \in \lambda_p(A,J) \text{ and } k \in \N.
    \]
\item If $p = 0$, then 
    \[
    \lambda_p(A,J) = \Big\{(x_j)_{j \in J} \in \K^J : \lim_{j \in J, |j| \to \infty} a_{j,k} x_j = 0 \text{ for all } k \in \N\Big\}
    \]
    endowed with the seminorms
    \[
    \|x\|_k = \sup_{j \in J} |a_{j,k} x_j| \ \text{ for } x = (x_j)_{j \in J} \in \lambda_p(A,J) \text{ and } k \in \N.
    \]
\end{itemize}
Note that the sequence $(e_j)_{j \in J}$ of canonical vectors in $\K^J$ is clearly a basis of $\lambda_p(A,J)$ 
for every $p \in \{0\} \cup [1,\infty)$.
We refer the reader to the books \cite{HJar81,GKot69,RMeiDVog97} for a detailed study of K\"othe sequence spaces.

Let us adopt the convention that $\frac{0}{0} = 1$. It is well known that a bilateral weighted backward shift $B_w$ is an operator 
on $\lambda_p(A,\Z)$ (i.e., it maps $\lambda_p(A,\Z)$ into itself) if and~only if, for each $k \in \N$, 
there exists $\ell \in \N$ such that $a_{j,k} = 0$ whenever $a_{j+1,\ell} = 0$ ($j \in \Z$),~and  
\begin{equation}\label{Bw-Defined}
\sup_{j \in \Z} \frac{a_{j,k} |w_{j+1}|}{a_{j+1,\ell}} < \infty.
\end{equation}
In this case, $B_w$ is invertible if and only if, for each $k \in \N$, there exists $\ell \in \N$ such that 
$a_{j+1,k} = 0$ whenever $a_{j,\ell} = 0$ ($j \in \Z$), and  
\begin{equation}\label{Bw-Invertible}
\sup_{j \in \Z} \frac{a_{j+1,k}}{a_{j,\ell} |w_{j+1}|} < \infty.
\end{equation}

Theorem~\ref{AE-T2} applied to K\"othe sequence spaces gives the following result.

\begin{corollary}\label{AE-C1}
Consider a K\"othe sequence space $X = \lambda_p(A,\Z)$, where $A = (a_{j,k})$ is a K\"othe matrix on $\Z$ 
and $p \in \{0\} \cup [1,\infty)$. 
Let $w = (w_j)_{j \in \Z}$ be a sequence of nonzero scalars such that the bilateral weighted backward shift
\[
B_w : (x_j)_{j \in \Z} \in X \mapsto (w_{j+1}x_{j+1})_{j \in \Z} \in X
\]
is an invertible operator on $X$ (i.e., conditions (\ref{Bw-Defined}) and (\ref{Bw-Invertible}) hold). 
Then $B_w$ is average expansive if and only if there exists $k \in \N$ such that
\begin{equation}\label{AE-F3}
\sup_{n \in \N} \Big(\frac{1}{n} \sum_{j=1}^n a_{-j,k}|w_{-j+1} \cdots w_0|\Big) = \infty \ \ \text{ or } \ \
\sup_{n \in \N} \Big(\frac{1}{n} \sum_{j=1}^n \frac{a_{j,k}}{|w_1 \cdots w_j|}\Big) = \infty.
\end{equation}
\end{corollary}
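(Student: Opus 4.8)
The plan is to deduce this corollary as a direct instance of Theorem~\ref{AE-T2}, so the only real work is to check that $\lambda_p(A,\Z)$ satisfies the standing hypotheses of that theorem and then to identify the seminorms of the canonical vectors. First I would observe that, by the discussion preceding the corollary, $X = \lambda_p(A,\Z)$ is a Fréchet sequence space over $\Z$ in which $(e_j)_{j \in \Z}$ is a basis, and that $B_w$ is invertible precisely because conditions~(\ref{Bw-Defined}) and~(\ref{Bw-Invertible}) are assumed to hold. Thus the hypotheses of Theorem~\ref{AE-T2} are met, provided the defining seminorms form an increasing sequence.

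To see that $(\|\cdot\|_k)_{k \in \N}$ is increasing, I would invoke the monotonicity of the Köthe matrix in its second index, namely $a_{j,k} \le a_{j,k+1}$ for all $j$ and $k$. When $p \in [1,\infty)$ this gives $|a_{j,k} x_j| \le |a_{j,k+1} x_j|$ termwise and hence $\|x\|_k \le \|x\|_{k+1}$; when $p = 0$ the same termwise inequality yields $\sup_j |a_{j,k} x_j| \le \sup_j |a_{j,k+1} x_j|$, so again $\|x\|_k \le \|x\|_{k+1}$. Either way the sequence of seminorms is increasing, as required.

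The key computation is that $\|e_j\|_k = a_{j,k}$ for every $j \in \Z$ and $k \in \N$, in both cases $p \in [1,\infty)$ and $p = 0$. Indeed, the canonical vector $e_j$ has $i$-th coordinate $\delta_{ij}$, so for $p \in [1,\infty)$ one has $\|e_j\|_k = \big(\sum_{i} |a_{i,k}\delta_{ij}|^p\big)^{1/p} = a_{j,k}$, and for $p = 0$ one has $\|e_j\|_k = \sup_i |a_{i,k}\delta_{ij}| = a_{j,k}$.

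Finally, I would substitute $\|e_{-j}\|_k = a_{-j,k}$ and $\|e_j\|_k = a_{j,k}$ into the characterization~(\ref{AE-F2}) furnished by Theorem~\ref{AE-T2}. This turns the two displayed suprema in~(\ref{AE-F2}) into exactly the two suprema appearing in~(\ref{AE-F3}), which completes the proof. I expect essentially no obstacle here: the argument is a routine specialization of Theorem~\ref{AE-T2}, and the only point requiring a line of care is the elementary verification that the Köthe seminorms are increasing, which is what allows Theorem~\ref{AE-T2} to be applied verbatim.
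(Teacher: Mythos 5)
Your proposal is correct and is exactly the paper's argument: the paper proves this corollary simply by stating that Theorem~\ref{AE-T2} applied to K\"othe sequence spaces yields the result, and your verification (the canonical vectors form a basis, the K\"othe seminorms are increasing since $a_{j,k}\le a_{j,k+1}$, and $\|e_j\|_k=a_{j,k}$ for both $p\in[1,\infty)$ and $p=0$) supplies precisely the routine details that substitution into (\ref{AE-F2}) requires.
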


\begin{example}
If $a_{j,k} = 1$ for all $j \in \Z$ and $k \in \N$, then
\[
\lambda_0(A,\Z) = c_0(\Z) \ \ \text{ and } \ \ \lambda_p(A,\Z) = \ell^p(\Z) \text{ for } p \in [1,\infty).
\]
In this case, Corollary~\ref{AE-C1} recovers \cite[Proposition~15]{AlvBerMes21}.
We can also recover \cite[Proposition~15]{AlvBerMes21} directly from Theorem~\ref{AE-T2}.
\end{example}

\begin{remark}\label{AE-R1}
Suppose that a bilateral weighted forward shift $F_w$ is an invertible operator on a Fr\'echet sequence space $X$ 
as in Theorem~\ref{AE-T2}. Since $F_w$ is average expansive if and only if so is $F_w^{-1}$, and since
\[
F_w^{-1} = B_{w'}, \ \text{ where } w' = \Big(\frac{1}{w_{j-1}}\Big)_{j \in \Z},
\]
it follows from Theorem~\ref{AE-T2} that $F_w$ is average expansive if and only if there exists $k \in \N$ such that
\begin{equation}\label{AE-F4}
\sup_{n \in \N} \Big(\frac{1}{n} \sum_{j=1}^n |w_0 \cdots w_{j-1}| \|e_j\|_k\Big) = \infty \ \ \text{ or } \ \
\sup_{n \in \N} \Big(\frac{1}{n} \sum_{j=1}^n \frac{\|e_{-j}\|_k}{|w_{-j} \cdots w_{-1}|}\Big) = \infty.
\end{equation}
In the particular case of K\"othe sequence spaces, it is well-known that $F_w$ is an operator on $\lambda_p(A,\Z)$ if and only if, 
for each $k \in \N$, there exists $\ell \in \N$ such that $a_{j+1,k} = 0$ whenever $a_{j,\ell} = 0$ ($j \in \Z$), and  
\begin{equation}\label{Fw-Defined}
\sup_{j \in \Z} \frac{a_{j+1,k} |w_j|}{a_{j,\ell}} < \infty.
\end{equation}
In this case, $F_w$ is invertible if and only if, for each $k \in \N$, there exists $\ell \in \N$ such that 
$a_{j,k} = 0$ whenever $a_{j+1,\ell} = 0$ ($j \in \Z$), and  
\begin{equation}\label{Fw-Invertible}
\sup_{j \in \Z} \frac{a_{j,k}}{a_{j+1,\ell} |w_j|} < \infty.
\end{equation}
Assuming that $F_w$ is an invertible operator on $\lambda_p(A,\Z)$ (i.e., conditions (\ref{Fw-Defined}) and (\ref{Fw-Invertible}) hold), 
we have that $F_w$ is average expansive if and only if there exists $k \in \N$ such that
\begin{equation}\label{AE-F5}
\sup_{n \in \N} \Big(\frac{1}{n} \sum_{j=1}^n a_{j,k} |w_0 \cdots w_{j-1}|\Big) = \infty \ \ \text{ or } \ \
\sup_{n \in \N} \Big(\frac{1}{n} \sum_{j=1}^n \frac{a_{-j,k}}{|w_{-j} \cdots w_{-1}|}\Big) = \infty.
\end{equation}
\end{remark}

\begin{remark}\label{AE-R2}
For operators that are not necessarily invertible, we can consider the following variation of the concept of average expansivity:
$T \in L(X)$ is said to be {\em average positively expansive} if the following condition holds:
\begin{itemize}
\item [(APE)] For each $x \in X \backslash \{0\}$, there exists $\alpha \in I$ such that
\[
\sup_{n \in \N} \Big(\frac{1}{n} \sum_{j=0}^{n-1} \|T^j x\|_\alpha \Big) = \infty.
\]
\end{itemize}
In the invertible case, it is easy to see that
\[
T \text{ or } T^{-1} \text{ is average positively expansive } \ \Longrightarrow \ \ T \text{ is average expansive}.
\]
The converse is easily seen to be false in general (e.g., $T : (x,y) \in \R^2 \mapsto (2x,y/2) \in \R^2$).
However, the converse is true in the case of invertible bilateral weighted shifts.
This follows from the fact that {\it the validity of the first (resp.\ second) equality in (\ref{AE-F2}), for some $k \in \N$,
is equivalent to $B_w$ (resp.\ $B_w^{-1}$) being average positively expansive}.
The same fact holds for $F_w$ provided we replace (\ref{AE-F2}) by (\ref{AE-F4}).
For unilateral weighted forward shifts, the characterization of average positive expansivity is analogous:
{\it A unilateral weighted forward shift $F_w$ on a Fr\'echet sequence space $X$ in which $(e_j)_{j \in \N}$ is a basis
is average positively expansive if and only if there exists $k \in \N$ such that
\[
\sup_{n \in \N} \Big(\frac{1}{n} \sum_{j=1}^n |w_1 \cdots w_{j-1}| \|e_j\|_k\Big) = \infty,
\]
where $(\|\cdot\|_k)_{k \in \N}$ is an increasing sequence of seminorms that induces the topology of $X$
and we are adopting the convention that $|w_1 \cdots w_{j-1}| = 1$ if $j = 1$.}
\end{remark}

Recall \cite{BCDFP} that an operator $T$ on a topological vector space $X$ is said to be {\em Li-Yorke chaotic}
if there is an uncountable set $S \subset X$ such that each pair $(x,y)$ of distinct points in $S$ is a {\em Li-Yorke pair} for $T$,
in the sense that the following conditions hold:
\begin{itemize}
\item [(LY1)] For every neighborhood $V$ of $0$ in $X$, there exists $n \in \N$ such that $T^nx - T^ny \in V$.
\item [(LY2)] There exists a neighborhood $U$ of $0$ in $X$ such that $T^nx - T^ny \not\in U$ for infinitely many values of $n$.
\end{itemize}
Recall also that $T$ is said to be {\em topologically transitive} (resp.\ {\em topologically mixing}) if 
for any pair $A,B$ of nonempty open sets in $X$, there exists $n \in \N_0$ (resp.\ $n_0 \in \N_0$) such that
$T^n(A) \cap B \neq \emptyset$ (resp.\ for all $n \geq n_0$).

It was proved in \cite[Theorem~38]{BCDFP} that uniformly expansive operators on locally convex spaces
are neither Li-Yorke chaotic nor topologically transitive.
The next result shows that this fact is no longer true if we replace uniform expansivity by average expansivity.
In fact, average expansive operators on Banach spaces can even be distributionally chaotic.

Recall that the {\em upper density} of a set $A \subset \N$ is defined by
\[
\udens(A) = \limsup_{n \to \infty} \frac{\card(A \cap [1,n])}{n}\cdot
\]
An operator $T$ on a Banach space $X$ is said to be {\em distributionally chaotic} if there exist an uncountable set $\Gamma \subset X$
and $\eps > 0$ such that each pair $(x,y)$ of distinct points in $\Gamma$ is a {\em distributionally chaotic pair} for $T$, in the sense that
\[
\udens\{n \in \N : \|T^nx - T^ny\| \geq \eps\} = 1
\]
and
\[
\udens\{n \in \N : \|T^nx - T^ny\| < \tau\} = 1 \ \text{ for all } \tau > 0.
\]
By \cite[Theorem~12]{BerBonMulPer13}, $T$ is distributionally chaotic if and only if $T$ admits a {\em distributionally irregular vector},
that is, a vector $x \in X$ for which there exist $A,B \subset \N$ with $\udens(A) = \udens(B) = 1$ such that
\[
\lim_{n \in A} \|T^nx\| = 0 \ \ \ \text{ and } \ \ \ \lim_{n \in B} \|T^nx\| = \infty.
\]

\begin{theorem}\label{AE-T3}
Let $X = c_0(\Z)$ or $X = \ell_p(\Z)$ for some $p \in [1,\infty)$.
There exists a bounded sequence $w = (w_j)_{j \in \Z}$ of scalars, with $\inf_{j \in \Z} |w_j| > 0$, such that
the bilateral weighted backward shift $B_w : X \to X$ is topologically transitive and 
both $B_w$ and $B_w^{-1}$ are average positively expansive and distributionally chaotic.
\end{theorem}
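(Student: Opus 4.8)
The plan is to construct the weight sequence $w$ explicitly and then read off each of the required properties from the characterizations already available. Write $\ell_i = \log|w_i|$ and put $P_n = \sum_{i=-n+1}^{0}\ell_i$ and $Q_n = \sum_{i=1}^{n}\ell_i$, so that in $X=c_0(\Z)$ or $\ell_p(\Z)$ one has $\|B_w^n e_0\| = e^{P_n}$ and $\|B_w^{-n}e_0\| = e^{-Q_n}$. Since here $a_{j,k}\equiv 1$, Remark~\ref{AE-R2} shows that $B_w$ (resp.\ $B_w^{-1}$) is average positively expansive if and only if $\sup_{n}\frac1n\sum_{j=1}^n e^{P_j}=\infty$ (resp.\ $\sup_{n}\frac1n\sum_{j=1}^n e^{-Q_j}=\infty$). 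By the criterion of \cite{BerBonMulPer13} recalled above, $B_w$ (resp.\ $B_w^{-1}$) is distributionally chaotic as soon as $e_0$ is a distributionally irregular vector, i.e.\ there are sets of upper density $1$ along which $P_n\to-\infty$ and along which $P_n\to+\infty$ (resp.\ along which $Q_n\to+\infty$ and along which $Q_n\to-\infty$). Finally, $B_w$ is topologically transitive once the Hypercyclicity Criterion holds along some subsequence $(n_k)$ with the dense subspace of finitely supported sequences; computing on each $e_j$ this reduces (see \cite{HSal95}) to $\sum_{i=j-n_k+1}^{j}\ell_i\to-\infty$ and $\sum_{i=j+1}^{j+n_k}\ell_i\to+\infty$ for every $j\in\Z$.

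The key simplification is to build the two sides of $w$ in an \emph{anti-symmetric} fashion, $\ell_i=-\ell_{-i}$, so that $Q_n=-P_n+O(1)$ and every condition above is governed by the single partial-sum sequence $(P_n)$. It then suffices to choose $(\ell_{-i})_{i\ge 0}\in\{-c,0,+c\}$ for a fixed $c>0$ (hence $|w_j|\in\{e^{-c},1,e^{c}\}$, giving $\inf_j|w_j|=e^{-c}>0$ and $\sup_j|w_j|=e^{c}<\infty$) so that $P_n$ oscillates between two disjoint sets of upper density $1$. Concretely I would use a block scheme with parameters $h_m\uparrow\infty$ and plateau lengths $L_m$: climb with steps $+c$ from height $-h_m$ to $+h_m$, stay flat (steps $0$) at $+h_m$ for $L_m$ terms, descend with steps $-c$ to $-h_{m+1}$, stay flat at $-h_{m+1}$ for $L_{m+1}$ terms, and iterate. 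Let $B$ be the union of the high plateaus and $A$ the union of the low plateaus. Choosing $L_m$ to grow fast enough that each plateau dwarfs the entire preceding history and that the climbing/descending portions have density $0$, one gets $\udens(A)=\udens(B)=1$; and since $P_n\equiv +h_m$ on the $m$-th high plateau and $\equiv -h_{m+1}$ on the $m$-th low plateau, this yields $\lim_{n\in B}P_n=+\infty$ and $\lim_{n\in A}P_n=-\infty$.

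With this sequence in hand, each property follows quickly. Distributional chaos of $B_w$ is immediate from the oscillation of $P_n$ on $A$ and $B$, and that of $B_w^{-1}$ from $Q_n=-P_n+O(1)$ (the roles of the two density-one sets simply swap). For average positive expansivity, at the right end of the $m$-th high plateau one has $n\approx L_m$ while $\sum_{j\le n}e^{P_j}\ge L_m\,e^{h_m}$, so the Cesàro average is $\gtrsim e^{h_m}\to\infty$, giving the condition for $B_w$; symmetrically, the low plateaus of $Q_n$ give it for $B_w^{-1}$. For transitivity I would take $(n_k)$ to be the right endpoints of the low plateaus of $P$, so that $P_{n_k}\to-\infty$ and, by anti-symmetry, $Q_{n_k}\to+\infty$; for a fixed $j$ the window sums $\sum_{i=j-n_k+1}^{j}\ell_i$ and $\sum_{i=j+1}^{j+n_k}\ell_i$ differ from $P_{n_k}$ and $Q_{n_k}$ only by the bounded quantities $Q_j$ (and by the value of $P$ or $Q$ at a shifted argument, which is unchanged on a long plateau), hence still tend to $-\infty$ and $+\infty$ respectively, verifying the Hypercyclicity Criterion for every $j$.

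The hard part is not any single verification but making all the asymptotic requirements compatible simultaneously: the same growth of the block lengths must force the plateau sets to have upper density $1$ (for distributional chaos), render the climbing and descending portions asymptotically negligible, push the Cesàro averages to infinity (for average positive expansivity), and leave one subsequence $(n_k)$ along which both one-sided window sums diverge in the correct directions for \emph{every} fixed $j$ (for transitivity). Balancing these is essentially a bookkeeping of the rates $h_m$ and $L_m$; once a growth schedule is fixed that makes the transition portions sparse and the plateau heights divergent, each of the four claims reduces to a routine estimate.
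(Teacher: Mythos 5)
Your overall strategy is sound and is essentially the paper's own proof written on a logarithmic scale: the paper's weight sequence also takes (up to two correcting entries per block) the three values $\tfrac12,1,2$, i.e.\ log-steps $-c,0,+c$ with $c=\log 2$; it is arranged symmetrically on the two sides of $\Z$ so that $\|B_w^ne_{-1}\|=\|B_w^{-n}e_1\|$ (your anti-symmetry $\ell_i=-\ell_{-i}$); it alternates low and high plateaus whose lengths are chosen inductively so that each plateau dwarfs the entire preceding history; it deduces distributional chaos from a distributionally irregular canonical vector via \cite[Theorem~12]{BerBonMulPer13}; and it proves transitivity through the same weighted-shift hypercyclicity criterion, with the subsequence placed where the orbit of $e_{-1}$ is small. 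Your use of Remark~\ref{AE-R2} to reduce average positive expansivity of $B_w$ and $B_w^{-1}$ to $\sup_n\frac1n\sum_{j=1}^n e^{P_j}=\infty$ is legitimate and slightly shortens that step (the paper instead proves directly that the Ces\`aro averages of $\|B_w^{\pm n}x\|$ diverge for every $x\neq 0$).

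There is, however, one concrete error in your parameterization, and it breaks exactly the distributional-chaos part. In your scheme the low plateau at height $-h_{m+1}$ has length $L_{m+1}$, and when you iterate, the next high plateau, at height $+h_{m+1}$, also has length $L_{m+1}$. So every high plateau is immediately preceded (up to a climb of length $2h_{m+1}/c$) by a low plateau of the \emph{same} length. Writing $H$ for the time elapsed before that low plateau, at the end of the high plateau at $+h_{m+1}$ one has
\[
\frac{\card(B\cap[1,n])}{n}\;\leq\;\frac{H+L_{m+1}}{H+2L_{m+1}+2h_{m+1}/c}\;\longrightarrow\;\frac12,
\]
and the ends of high plateaus are precisely where this ratio is locally maximal; hence $\udens(B)=\tfrac12$, not $1$, no matter how fast $(L_m)$ grows. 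Your two stated requirements are in fact incompatible: under this indexing no choice of $(L_m)$ can make each plateau dwarf the entire preceding history, because a high plateau can never dwarf the equally long low plateau just before it. Thus $e_0$ is not shown to be distributionally irregular, and the distributional chaos claims for $B_w$ and $B_w^{-1}$ fail as written (the average expansivity and transitivity arguments survive, since they use only the plateau heights and the subsequence $(n_k)$). The fix is immediate and is exactly what the paper does: decouple the lengths, giving every plateau, low and high alike, its own length chosen inductively, after all previous ones, to dominate the total elapsed time --- in the paper these are the alternating parameters $k_j$ (low plateaus of length $2^{k_j}$, condition (\ref{AE-T3-Eq1})) and $i_j$ (high plateaus of length $i_j$, condition (\ref{AE-T3-Eq3})).
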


Recall that topological transitivity is equivalent to {\em hypercyclicity} (i.e., the existence of a dense orbit) 
for operators on separable $F$-spaces.

The proof below is inspired by the construction of a certain unilateral weighted forward shift made in \cite[Theorem~25]{BerBonPerWu18}.

\begin{proof}
We define $w = (w_j)_{j \in \Z}$ as a sequence of blocks
\[
w = (\ldots B_3 A_3 B_2 A_2 B_1 A_1 I C_1 B_1 C_2 B_2 C_3 B_3 \ldots),
\]
where $I = (1,1)$, each $A_j$ has the form
\[
A_j = \Big(\underbrace{1,1,\ldots,1}_{2^{k_j} - 1 \text{ times}},\frac{j}{2(j+1)},
\underbrace{\frac{1}{2},\frac{1}{2},\ldots,\frac{1}{2}}_{2k_j - 1 \text{ times}},1,\underbrace{2,2,\ldots,2}_{2k_j \text{ times}}\Big),
\]
each $B_j$ has the form
\[
B_j = \Big(\underbrace{\frac{1}{2},\frac{1}{2},\ldots,\frac{1}{2}}_{r_j \text{ times}},
\underbrace{1,1,\ldots,1}_{i_j - 1 \text{ times}},\underbrace{2,2,\ldots,2}_{r_j \text{ times}}\Big),
\]
each $C_j$ has the form
\[
C_j = \Big(\underbrace{\frac{1}{2},\frac{1}{2},\ldots,\frac{1}{2}}_{2k_j \text{ times}},1,
\underbrace{2,2,\ldots,2}_{2k_j - 1 \text{ times}},\frac{2(j+1)}{j},\underbrace{1,1,\ldots,1}_{2^{k_j} - 1 \text{ times}}\Big),
\]
and the first number $1$ in $I$ lies at position $0$ in $w$.
The number $r_j$ is the smallest positive integer greater than or equal to $2\log_2(j+1)$ 
and the numbers $k_j$ and $i_j$ will be chosen inductively in the next paragraph.
Let $a_j = 4k_j + 2^{k_j}$ denote the length of the blocks $A_j$ and $C_j$, and let $b_j = 2r_j + i_j - 1$ denote the length of the block $B_j$.
We also define $s_j = a_1+b_1+a_2+b_2+\cdots+a_j$ and $t_j = s_j + b_j$ for $j \in \N$, and $t_0 = 0$.

We define $k_1 = 2$ and $i_1 = 2$. Hence,
\begin{align*}
A_1 &= \Big(1,1,1,\frac{1}{4},\frac{1}{2},\frac{1}{2},\frac{1}{2},1,2,2,2,2\Big), \\
B_1 &= \Big(\frac{1}{2},\frac{1}{2},1,2,2\Big), \\
C_1 &= \Big(\frac{1}{2},\frac{1}{2},\frac{1}{2},\frac{1}{2},1,2,2,2,4,1,1,1\Big).
\end{align*}
Suppose that $j \geq 2$ and that $k_1,\ldots,k_{j-1}$ and $i_1,\ldots,i_{j-1}$ have been chosen. Note that
\begin{align*}
\big(\|B_w^n e_{-1}\|\big)_{t_{j-1} < n \leq s_j} &= \big(\|B_w^{-n} e_1\|\big)_{t_{j-1} < n \leq s_j}\\
&= \Big(\frac{2}{j},\frac{2^2}{j},\ldots,\frac{2^{2k_j}}{j},\frac{2^{2k_j}}{j},
  \frac{2^{2k_j-1}}{j},\ldots,\frac{2}{j},\underbrace{\frac{1}{j+1},\frac{1}{j+1},\ldots,\frac{1}{j+1}}_{2^{k_j} \text{ times}}\Big)
\end{align*}
and
\begin{align*}
\big(\|B_w^n e_{-1}\|\big)_{s_j < n \leq t_j} &= \big(\|B_w^{-n} e_1\|\big)_{s_j < n \leq t_j}\\
&= \Big(\frac{2}{j+1},\ldots,\frac{2^{r_j-1}}{j+1},
  \underbrace{\frac{2^{r_j}}{j+1},\frac{2^{r_j}}{j+1},\ldots,\frac{2^{r_j}}{j+1}}_{i_j \text{ times}},
  \frac{2^{r_j-1}}{j+1},\ldots,\frac{2}{j+1},\frac{1}{j+1}\Big).
\end{align*}
Since
\begin{align*}
\frac{\card\{1 \leq n \leq s_j : \|B_w^n e_{-1}\| \leq \frac{1}{j+1}\}}{s_j} 
&= \frac{\card\{1 \leq n \leq s_j : \|B_w^{-n} e_1\| \leq \frac{1}{j+1}\}}{s_j}\\
&\geq \frac{2^{k_j}}{t_{j-1} + 4k_j + 2^{k_j}} \to 1 \ \text{ as } k_j \to \infty,
\end{align*}
we can choose $k_j > k_{j-1}$ large enough so that
\begin{align}\label{AE-T3-Eq1}
\frac{\card\{1 \leq n \leq s_j : \|B_w^n e_{-1}\| \leq \frac{1}{j+1}\}}{s_j} 
&= \frac{\card\{1 \leq n \leq s_j : \|B_w^{-n} e_1\| \leq \frac{1}{j+1}\}}{s_j}\\
&\geq 1 - \frac{1}{j}\cdot \nonumber
\end{align}
Let
\[
\alpha_j = \frac{1}{s_j} \sum_{n=1}^{s_j} \|B_w^n e_{-1}\| = \frac{1}{s_j} \sum_{n=1}^{s_j} \|B_w^{-n} e_1\|.
\]
Since
\[
\alpha_j = \frac{\sum_{n=1}^{t_{j-1}} \|B_w^n e_{-1}\| + \frac{2(2^{2k_j + 1}-2)}{j} + \frac{2^{k_j}}{j+1}}{t_{j-1} + 4k_j + 2^{k_j}}
  \geq \frac{2^{2k_j+2} - 4}{j(t_{j-1} + 4k_j + 2^{k_j})} \to \infty \ \text{ as } k_j \to \infty,
\]
we can assume that $k_j$ was chosen so large that
\begin{equation}\label{AE-T3-Eq2}
\frac{s_j \alpha_j}{s_j + 4 r_j} \geq j+1.
\end{equation}
On the other hand, since $2^{r_j} \geq (j+1)^2$, we have that
\begin{align*}
\frac{\card\{1 \leq n \leq t_j : \|B_w^n e_{-1}\| \geq j+1\}}{t_j}
&= \frac{\card\{1 \leq n \leq t_j : \|B_w^{-n} e_1\| \geq j+1\}}{t_j}\\
&\geq \frac{i_j}{s_j + 2r_j + i_j - 1} \to 1 \ \text{ as } i_j \to \infty.
\end{align*}
Therefore, we can choose $i_j > i_{j-1}$ so large that
\begin{align}\label{AE-T3-Eq3}
\frac{\card\{1 \leq n \leq t_j : \|B_w^n e_{-1}\| \geq j+1\}}{t_j}
&= \frac{\card\{1 \leq n \leq t_j : \|B_w^{-n} e_1\| \geq j+1\}}{t_j}\\
&\geq 1 - \frac{1}{j}\cdot \nonumber
\end{align}
By induction, this completes the construction of the sequences $(k_j)_{j \in \N}$ and $(i_j)_{j \in \N}$.

By (\ref{AE-T3-Eq1}),
\[
\udens\{n \in \N : \|B_w^n e_{-1}\| < \tau\} = \udens\{n \in \N : \|B_w^{-n} e_1\| < \tau\} = 1 \ \text{ for all } \tau > 0.
\]
By (\ref{AE-T3-Eq3}),
\[
\udens\{n \in \N : \|B_w^n e_{-1}\| \geq k\} = \udens\{n \in \N : \|B_w^{-n} e_1\| \geq k\} = 1 \ \text{ for all } k \in \N.
\]
Thus, $e_{-1}$ is a distributionally irregular vector for $B_w$ and $e_1$ is a distributionally irregular vector for $B_w^{-1}$,
showing that both $B_w$ and $B_w^{-1}$ are distributionally chaotic.

Now, we claim that
\begin{equation}\label{AE-T3-Eq4}
\frac{1}{n} \sum_{i=1}^n \|B_w^i e_{-1}\| = \frac{1}{n} \sum_{i=1}^n \|B_w^{-i} e_1\| \geq j+1 \ \ 
  \text{ whenever } t_{j-1} + 4k_j \leq n \leq t_j + 4k_{j+1}.
\end{equation}
In fact, fix $n \in [t_{j-1} + 4k_j, t_j + 4k_{j+1}]$ (in this paragraph we are considering intervals of integers).
It is enough to prove that
\begin{equation}\label{AE-T3-Eq5}
\frac{1}{n} \sum_{i=1}^n \|B_w^i e_{-1}\| \geq j+1.
\end{equation}
By (\ref{AE-T3-Eq2}),
\begin{equation}\label{AE-T3-Eq6}
\frac{1}{s_j + 4 r_j} \sum_{i=1}^{s_j} \|B_w^i e_{-1}\| \geq j+1.
\end{equation}
In particular, the average $\frac{1}{s_j} \sum_{i=1}^{s_j} \|B_w^i e_{-1}\|$ is greater than or equal to $j+1$.
This implies that (\ref{AE-T3-Eq5}) holds if $n \in [t_{j-1} + 4k_j, s_j]$, 
since $\|B_w^i e_{-1}\| = \frac{1}{j+1} < j+1$ for all $i \in [t_{j-1} + 4k_j + 1, s_j]$.
In the case $n$ belongs to the interval $[s_j, t_j + 4k_{j+1}]$, we observe that this interval can be divided into five subintervals, namely:
\begin{align*}
&[s_j, s_j + r_j -1], \ \ [s_j + r_j, t_j - r_j], \ \ [t_j - r_j + 1, t_j + r_j],\\
&[t_j + r_j +1, t_j + 4 k_{j+1} - r_j] \ \text{ and } [t_j + 4 k_{j+1} - r_j + 1, t_j + 4 k_{j+1}].
\end{align*}
For each $i$ in the second or fourth subinterval above, we have $\|B_w^i e_{-1}\| \geq j+1$.
On the other hand, the lengths of the other three subintervals add up to $4 r_j$.
Since we have a $4 r_j$ in the denominator in (\ref{AE-T3-Eq6}), 
we see that the inequality in (\ref{AE-T3-Eq6}) implies that (\ref{AE-T3-Eq5}) also holds in this case.
This completes the proof of our claim. It follows from (\ref{AE-T3-Eq4}) that
\[
\lim_{n \to \infty} \frac{1}{n} \sum_{j=1}^n \|B_w^j x\| = \lim_{n \to \infty} \frac{1}{n} \sum_{j=1}^n \|B_w^{-j} x\| = \infty
  \ \ \text{ for all } x \neq 0.
\]
In particular, both $B_w$ and $B_w^{-1}$ are average positively expansive.

Finally, in order to prove that $B_w$ is topologically transitive, we shall apply the characterization of hypercyclicity contained
in \cite[Example~4.15]{KGroAPer11}, namely: {\it $B_w$ is hypercyclic if and only if there is an increasing sequence $(n_j)_{j \in \N}$
of positive integers such that, for every $t \in \Z$,}
\[
w_{t - n_j + 1} \cdots w_t \to 0 \ \textit{ and } \ w_{t+1}^{-1} \cdots w_{t+n_j}^{-1} \to 0 \ \textit{ as } j \to \infty.
\]
For our $B_w$, we consider $n_j = t_{j-1} + 4k_j + 2^{k_j - 1}$ ($j \in \N$). Note that
\[
w_{-n} \cdots w_{-1} = \|B_w^n e_{-1}\| = \frac{1}{j+1} \ \ \text{ and } \ \ w_2^{-1} \cdots w_{n+1}^{-1} = \|B_w^{-n} e_1\| = \frac{1}{j+1}\,,
\]
whenever $n_j - 2^{k_j - 1} < n \leq n_j + 2^{k_j - 1}$ ($j \in \N$).
Let $t \in \Z$ be arbitrary and choose $j_0 \in \N$ such that $|t| < 2^{k_{j_0} - 1}$.
For any $j \geq j_0$,
\[
w_{t - n_j + 1} \cdots w_t = \left\{\begin{array}{cl}
  \frac{w_{-(n_j - t)} \cdots w_{-1} w_0 \cdots w_t}{w_{t - n_j}} = \frac{1}{j+1} \cdot \frac{w_0 \cdots w_t}{w_{t - n_j}} & \text{if } t \geq 0\\
  w_{-n_j} \cdots w_{-1} = \frac{1}{j+1} & \text{if } t = -1\,.\\
  \frac{w_{-(n_j - t)} \cdots w_{t} w_{t+1} \cdots w_{-1}}{w_{t - n_j} \cdot\, w_{t+1} \cdots w_{-1}} 
    = \frac{1}{j+1} \cdot \frac{1}{w_{t - n_j} \cdot\, w_{t+1} \cdots w_{-1}}& \text{if } t \leq -2
\end{array}\right.
\]
Hence, $w_{t - n_j + 1} \cdots w_t \to 0$ as $j \to \infty$.
The computations showing that $w_{t+1}^{-1} \cdots w_{t+n_j}^{-1} \to 0$ as $j \to \infty$ are similar and are left to the reader.
\end{proof}

\begin{remark}
As we saw in the previous theorem, an average expansive weighted shift can be topologically transitive.
However, it cannot be topologically mixing. 
In fact, let $B_w$ be a bilateral weighted backward shift on a Fr\'echet sequence space $X$ over $\Z$
in which the sequence $(e_j)_{j \in \Z}$ of canonical vectors is a basis.
By \cite[Theorem~4.13]{KGroAPer11}, $B_w$ is topologically mixing if and only if
\[
w_{-j+1} \cdots w_0\, e_{-j} \to 0 \ \ \ \text{ and } \ \ \ \frac{e_j}{w_1 \cdots w_j} \to 0 \ \ \text{ as } j \to \infty.
\]
By Theorem~\ref{AE-T2}, this implies that $B_w$ cannot be average expansive.
\end{remark}


\section{Uniformly expansive weighted shifts on K\"othe sequence spaces}\label{S-UE-K}

Our next result characterizes the uniformly expansive bilateral weighted forward shifts on K\"othe sequence spaces.
For this purpose, we adopt the convention that the infimum of a family indexed by the empty set is equal to $\infty$.

\begin{theorem}\label{UEWS-T1}
Consider a K\"othe sequence space $X = \lambda_p(A,\Z)$, where $A = (a_{j,k})$ is a K\"othe matrix on $\Z$ 
and $p \in \{0\} \cup [1,\infty)$. For each $k \in \N$, define
\[
I_k = \{j \in \Z : a_{j,k} \neq 0\}.
\]
Let $w = (w_j)_{j \in \Z}$ be a sequence of nonzero scalars such that the bilateral weighted forward shift
\[
F_w : (x_j)_{j \in \Z} \in X \mapsto (w_{j-1}x_{j-1})_{j \in \Z} \in X
\]
is an invertible operator on $X$ (i.e., conditions (\ref{Fw-Defined}) and (\ref{Fw-Invertible}) hold). 
Then $F_w$ is uniformly expansive if and only if one of the following properties holds:
\begin{itemize}
\item [\rm (A)] For each $k \in \N$, there exists $\ell \in \N$ such that
    \[
    \lim_{n \to \infty} \Big(\inf_{j \in I_k} \frac{a_{j+n,\ell} |w_j \cdots w_{j+n-1}|}{a_{j,k}}\Big) = \infty.
    \]
\item [\rm (B)] For each $k \in \N$, there exists $\ell \in \N$ such that
    \[
    \lim_{n \to \infty} \Big(\inf_{j \in I_k} \frac{a_{j-n,\ell}}{a_{j,k} |w_{j-n} \cdots w_{j-1}|}\Big) = \infty.
    \]
\item [\rm (C)] For each $k \in \N$, there exists $\ell \in \N$ such that
    \[
    \lim_{n \to \infty} \Big(\inf_{j \in I_k \cap \N} \frac{a_{j+n,\ell} |w_j \cdots w_{j+n-1}|}{a_{j,k}}\Big) = \infty
    \]
    and
    \[
    \lim_{n \to \infty} \Big(\inf_{j \in I_k \cap (-\N)} \frac{a_{j-n,\ell}}{a_{j,k} |w_{j-n} \cdots w_{j-1}|}\Big) = \infty.
    \]
\end{itemize}
\end{theorem}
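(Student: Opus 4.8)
The plan is to reduce the statement of uniform expansivity, which concerns the whole unit sphere $S_{\|\cdot\|_k}$, to uniform estimates on the canonical atoms. First I would record the elementary identities
\[
\|F_w^n e_j\|_k = |w_j \cdots w_{j+n-1}|\, a_{j+n,k}, \qquad \|F_w^{-n} e_j\|_k = \frac{a_{j-n,k}}{|w_{j-n}\cdots w_{j-1}|}, \qquad \|e_j\|_k = a_{j,k},
\]
and note that for every $x=(x_j)_{j\in\Z}\in X$ the seminorm of $F_w^{\pm n}x$ decouples over the coordinates: $\|F_w^{\pm n}x\|_\ell^p=\sum_j |x_j|^p\|F_w^{\pm n}e_j\|_\ell^p$ when $p\in[1,\infty)$, and $\|F_w^{\pm n}x\|_\ell=\sup_j |x_j|\,\|F_w^{\pm n}e_j\|_\ell$ when $p=0$. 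Writing
\[
R_k^\ell(j,n) = \frac{a_{j+n,\ell}\,|w_j\cdots w_{j+n-1}|}{a_{j,k}}, \qquad \widetilde R_k^\ell(j,n) = \frac{a_{j-n,\ell}}{a_{j,k}\,|w_{j-n}\cdots w_{j-1}|},
\]
conditions (A), (B), and (C) say precisely that, for each $k$ and a suitable $\ell$, the quantity $R_k^\ell(j,n)\to\infty$ uniformly over $j\in I_k$, that $\widetilde R_k^\ell(j,n)\to\infty$ uniformly over $j\in I_k$, or that $R_k^\ell(j,n)\to\infty$ uniformly over $j\in I_k\cap\N$ while $\widetilde R_k^\ell(j,n)\to\infty$ uniformly over $j\in I_k\cap(-\N)$.

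For sufficiency, if (A) holds I would, given $k$, pick the associated $\ell$, take $S^+_k=S_{\|\cdot\|_k}$ and $S^-_k=\emptyset$, and use the coordinatewise decoupling to get $\|F_w^n x\|_\ell\ge \big(\inf_{j\in I_k}R_k^\ell(j,n)\big)\|x\|_k\to\infty$ uniformly on the sphere (coordinates with $j\notin I_k$ only contribute additional nonnegative terms). Case (B) is symmetric. If (C) holds, I would split the sphere by which part of $x$ carries more seminorm: put $x$ in $S^+_k$ when $\|x_{\ge0}\|_k\ge\|x_{<0}\|_k$ and in $S^-_k$ otherwise, where $x_{\ge0}$ and $x_{<0}$ denote the restrictions of $x$ to the nonnegative and to the negative indices. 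Then $\|F_w^n x\|_\ell\ge\|F_w^n x_{\ge0}\|_\ell$ and the uniform forward expansion of the positive atoms gives a uniform lower bound on $S^+_k$, and symmetrically on $S^-_k$. The only delicate point is the central atom $e_0$, which is lumped with the forward part: setting $j_*=\min(I_k\cap\N)$ one has $\|F_w^n e_0\|_\ell=|w_0\cdots w_{j_*-1}|\,a_{j_*,k}\,R_k^\ell(j_*,n-j_*)\to\infty$ by the forward half of (C); the degenerate subcases $I_k\cap\N=\emptyset$ (lump $e_0$ with the backward part) and $I_k\subseteq\{0\}$ are handled separately.

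For necessity, I would evaluate (UE) on the normalized atoms $\widehat e_j=e_j/a_{j,k}$ $(j\in I_k)$, all of which lie on $S_{\|\cdot\|_k}$. The decomposition $S_{\|\cdot\|_k}=S^+_k\cup S^-_k$ sorts each atom into a forward set $P_k$, on which $\inf_{j\in P_k}R_k^\ell(j,n)\to\infty$, or a backward set $N_k$, on which $\inf_{j\in N_k}\widetilde R_k^\ell(j,n)\to\infty$, with $P_k\cup N_k=I_k$. The heart of the proof is a structural lemma showing that, after possibly enlarging $\ell$, the forward and backward regions may be taken to be complementary rays in the natural orientation: either $P_k=I_k$ (which leads to (A)), or $N_k=I_k$ (which leads to (B)), or $I_k\cap\N\subseteq P_k$ and $I_k\cap(-\N)\subseteq N_k$ (which leads to (C)). To prove it I would use the adjacent-atom recursions $R_k^\ell(j,n+1)=\frac{|w_j|a_{j+1,k}}{a_{j,k}}R_k^\ell(j+1,n)$ and its analogue for $\widetilde R$, which relate the expansion profiles of neighboring atoms, together with the operator-definedness and invertibility hypotheses (\ref{Fw-Defined}) and (\ref{Fw-Invertible}), which are exactly what bounds the resulting transition factors uniformly in $j$ and fixes the orientation. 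A useful auxiliary fact here is that, for a fixed $\ell$, the pointwise expansion $\|F_w^n e_j\|_\ell\to\infty$ either holds for every $j$ or for none (consecutive atoms differ only by a bounded nonzero weight factor), so that finitely many atoms may be reassigned between $P_k$ and $N_k$ without affecting the uniform limits.

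Finally, I would verify that the trichotomy can be chosen consistently for all $k$ at once. This rests on the nesting $I_1\subseteq I_2\subseteq\cdots$ (since $a_{j,k}$ is nondecreasing in $k$) and on the inequality $R_k^\ell(j,n)\ge R_{k'}^\ell(j,n)$ for $k\le k'$, together with the same inequality for $\widetilde R$: if (A) fails at some level $k_0$ and (B) fails at some level $k_1$, then at every level $k\ge\max(k_0,k_1)$ a uniform-forward or uniform-backward exhaustion of $I_k$ would propagate down and contradict one of these failures, so the decomposition at such levels must be of (C)-type, and the inequality transfers (C) from these levels down to all smaller $k$. The main obstacle is precisely the structural lemma of the previous paragraph: upgrading the qualitative sorting of atoms into a quantitative ray decomposition that is uniform in $n$ and correctly oriented forces one to use the definedness and invertibility conditions delicately, and this is the step where the argument genuinely goes beyond the Banach-space case.
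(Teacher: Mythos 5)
Your overall architecture coincides with the paper's: necessity is begun by evaluating (UE) on the normalized atoms $e_j/a_{j,k}$ and sorting them into forward and backward families, and sufficiency is proved by splitting the sphere according to which half of $x$ carries seminorm at least $1/2$ and using the coordinatewise decoupling; your treatment of the central atom $e_0$ in the sufficiency step is in fact more careful than the paper's, and your auxiliary fact that pointwise divergence of $\|F_w^ne_j\|_\ell$ propagates between adjacent atoms (via $\|F_w^{n+1}e_j\|_\ell=|w_j|\,\|F_w^ne_{j+1}\|_\ell$) is exactly what justifies reassigning finitely many atoms. The gap is in the necessity direction, precisely at what you yourself call the heart of the proof: the ``structural lemma'' asserting that a mixed decomposition can be re-oriented into complementary rays is never actually proved, and the mechanism you propose for proving it would fail. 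Conditions (\ref{Fw-Defined}) and (\ref{Fw-Invertible}) do \emph{not} bound your transition factor $|w_j|a_{j+1,k}/a_{j,k}$ uniformly in $j$: they bound $a_{j+1,k}|w_j|/a_{j,\ell}$ for some possibly much larger $\ell$, and since $a_{j,\ell}\geq a_{j,k}$ this gives no control on $|w_j|a_{j+1,k}/a_{j,k}$; indeed, the paper's necessity argument never invokes these two conditions at all.

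More fundamentally, the orientation cannot be fixed by any per-level argument, which is how your lemma is framed (``for each $k$, after possibly enlarging $\ell$''). Suppose at level $k$ a far-left atom $-n$ is sorted forward and some atom is sorted backward, both with output index $\ell_k$. The clash you need is between $a_{0,\ell_k}|w_{-n}\cdots w_{-1}|/a_{-n,k}\geq C_{k,n}$ and $a_{-n,\ell_k}/|w_{-n}\cdots w_{-1}|\to\infty$; multiplying these merely gives $a_{-n,\ell_k}/a_{-n,k}\to\infty$, which is no contradiction when $\ell_k>k$ (in $s(\Z)$, for instance, such ratios grow polynomially in $|n|$), and the index $\ell_k$ furnished by uniform expansivity at level $k$ is in general larger than $k$; enlarging $\ell$ only makes this worse. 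The paper's resolution is intrinsically cross-level: it extracts one backward witness $j_1\in I_r^-$ and one forward witness $j_2\in I_s^+$ from (possibly different) levels $r,s$, derives from them the global asymmetries (\ref{UEWS-F3}), namely $a_{-n,\ell_r}/|w_{-n}\cdots w_{-1}|\to\infty$ and $a_{n,\ell_s}|w_0\cdots w_{n-1}|\to\infty$, and only then, for levels $k\geq k_0=\max\{t,\ell_r,\ell_s\}$, exploits the monotonicity $a_{-n,k}\geq a_{-n,\ell_r}$ (available precisely because $k\geq\ell_r$) to force $I_k^+$ bounded below and $I_k^-$ bounded above; the remaining levels $k<k_0$ are handled by transferring condition (C) downward using $I_k\subset I_{k_0}$ and $a_{j,k}\leq a_{j,k_0}$. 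Your final paragraph contains exactly this downward-transfer machinery, but you deploy it only to make the trichotomy consistent across $k$ \emph{after} assuming the per-level lemma; it must instead be deployed to establish the orientation itself, since the level-$k$ data alone cannot do so.
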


\begin{proof}
($\Rightarrow$): By the definition of uniform expansivity, for each $k \in \N$, there exist $\ell_k \in \N$ 
and subsets $S_k^+$ and $S_k^-$ of $S_{\|\cdot\|_k}$ such that $S_{\|\cdot\|_k} = S_k^+ \cup S_k^-$,
\[
\|F_w^n(x)\|_{\ell_k} \to \infty \text{ uniformly on } S_k^+ \text{ as } n \to \infty
\]
and
\[
\|F_w^{-n}(x)\|_{\ell_k} \to \infty \text{ uniformly on } S_k^- \text{ as } n \to \infty.
\]
For each $n \in \N$, let
\[
C_{k,n} = \inf_{x \in S_k^+} \|F_w^n(x)\|_{\ell_k} \ \ \text{ and } \ \ D_{k,n} = \inf_{x \in S_k^-} \|F_w^{-n}(x)\|_{\ell_k}.
\]
Then $C_{k,n} \to \infty$ and $D_{k,n} \to \infty$ as $n \to \infty$. Consider the sets
\[
I_k^+ = \Big\{j \in I_k : \frac{e_j}{a_{j,k}} \in S_k^+\Big\} \ \ \text{ and } \ \ I_k^- = \Big\{j \in I_k : \frac{e_j}{a_{j,k}} \in S_k^-\Big\}.
\]
Clearly, $I_k = I_k^+ \cup I_k^-$. It follows from the definitions that, for each $n \in \N$,
\[
\inf_{j \in I_k^+} \frac{a_{j+n,\ell_k} |w_j \cdots w_{j+n-1}|}{a_{j,k}} 
  = \inf_{j \in I_k^+} \Big\|F_w^n\Big(\frac{e_j}{a_{j,k}}\Big)\Big\|_{\ell_k} \geq C_{k,n}
\]
and
\[
\inf_{j \in I_k^-} \frac{a_{j-n,\ell_k}}{a_{j,k} |w_{j-n} \cdots w_{j-1}|} 
  = \inf_{j \in I_k^-} \Big\|F_w^{-n}\Big(\frac{e_j}{a_{j,k}}\Big)\Big\|_{\ell_k} \geq D_{k,n}.
\]
Therefore, 
\begin{equation}\label{UEWS-F1}
\lim_{n \to \infty} \Big(\inf_{j \in I_k^+} \frac{a_{j+n,\ell_k} |w_j \cdots w_{j+n-1}|}{a_{j,k}}\Big) = \infty\, \text{ and }
\lim_{n \to \infty} \Big(\inf_{j \in I_k^-} \frac{a_{j-n,\ell_k}}{a_{j,k} |w_{j-n} \cdots w_{j-1}|}\Big) = \infty.
\end{equation}
If $I_k^- = \emptyset$ for all $k \in \N$, then (A) holds, and if $I_k^+ = \emptyset$ for all $k \in \N$, then (B) holds.
Suppose that there exist $r,s \in \N$ such that $I_r^- \neq \emptyset$ and $I_s^+ \neq \emptyset$.
Then (\ref{UEWS-F1}) implies that there is $t \in \N$ such that $I_k = \Z$ for all $k \geq t$.
Let $k_0 = \max\{t,\ell_r,\ell_s\}$. We claim that
\begin{equation}\label{UEWS-F2}
I_k^+ \text{ is bounded below and } I_k^- \text{ is bounded above for all } k \geq k_0.
\end{equation}
Indeed, take $j_1 \in I_r^-$ and $j_2 \in I_s^+$. Then,
\[
\lim_{n \to \infty} \frac{a_{j_1-n,\ell_r}}{a_{j_1,r} |w_{j_1-n} \cdots w_{j_1-1}|} = \infty \ \ \text{ and } \
\lim_{n \to \infty} \frac{a_{j_2+n,\ell_s} |w_{j_2} \cdots w_{j_2+n-1}|}{a_{j_2,s}} = \infty,
\]
which implies that
\begin{equation}\label{UEWS-F3}
\lim_{n \to \infty} \frac{a_{-n,\ell_r}}{|w_{-n} \cdots w_{-1}|} = \infty \ \ \text{ and } \
\lim_{n \to \infty} a_{n,\ell_s} |w_0 \cdots w_{n-1}| = \infty.
\end{equation}
Assume $k \geq k_0$. If $n \in \N$ and $-n \in I_k^+$, then
\[
\frac{a_{0,\ell_k} |w_{-n} \cdots w_{-1}|}{a_{-n,\ell_r}}   \geq \frac{a_{0,\ell_k} |w_{-n} \cdots w_{-1}|}{a_{-n,k}} \geq C_{k,n}.
\]
Hence, the first equality in (\ref{UEWS-F3}) implies that $I_k^+$ must be bounded below.
Similarly, if $n \in \N$ and $n \in I_k^-$, then
\[
\frac{a_{0,\ell_k}}{a_{n,\ell_s} |w_0 \cdots w_{n-1}|} \geq \frac{a_{0,\ell_k}}{a_{n,k} |w_0 \cdots w_{n-1}|} \geq D_{k,n}.
\]
Thus, the second equality in (\ref{UEWS-F3}) implies that $I_k^-$ must be bounded above, completing the proof of our claim.

For $k \geq k_0$, (\ref{UEWS-F2}) implies that the sets $I_k^+$ and $I_k^-$ are nonempty (because $I_k = \Z$) and, consequently, 
the limits in (\ref{UEWS-F1}) do not change if we remove (resp.\ add) a finite number of integers from (resp.\ to) these sets.
Thus, it follows from (\ref{UEWS-F2}) that (\ref{UEWS-F1}) is equivalent to the limits in (C).

For $k < k_0$, we have that $I_k \subset I_{k_0}$ and, therefore, the limits in (C) hold with $\ell = \ell_{k_0}$.

\smallskip\noindent
($\Leftarrow$): We first assume that (C) holds. Given $k \in \N$, let $\ell \in \N$ be as in (C). Let 
\[
I_k^+ = I_k \cap \N_0 \ \ \text{ and } \ \ I_k^- = I_k \cap (-\N).
\]
For each $x = (x_j)_{j \in \Z} \in X$, we define $x^+ = (x^+_j)_{j \in \Z} \in X$ and $x^- = (x^-_j)_{j \in \Z} \in X$ by
\[
x^+_j = \left\{\begin{array}{cl} x_j & \text{if } j \in I_k^+\\ 0 & \text{otherwise} \end{array}\right. \ \ \text{ and } \ \ \
x^-_j = \left\{\begin{array}{cl} x_j & \text{if } j \in I_k^-\\ 0 & \text{otherwise} \end{array}\right..
\]
Let 
\[
S_k^+ = \{x \in S_{\|\cdot\|_k} : \|x^+\|_k \geq 1/2\} \ \ \text{ and } \ \ S_k^- = \{x \in S_{\|\cdot\|_k} : \|x^-\|_k \geq 1/2\}.
\]
Clearly, $S_{\|\cdot\|_k} = S_k^+ \cup S_k^-$. For each $n \in \N$, let
\[
C_n = \inf_{j \in I_k^+} \frac{a_{j+n,\ell} |w_j \cdots w_{j+n-1}|}{a_{j,k}} \ \ \text{ and } \ \
D_n = \inf_{j \in I_k^-} \frac{a_{j-n,\ell}}{a_{j,k} |w_{j-n} \cdots w_{j-1}|}\cdot
\]
By our choice of $\ell$, $C_n \to \infty$ and $D_n \to \infty$ as $n \to \infty$. Take $y \in S_k^+$. If $p \in [1,\infty)$, then
\[
\|F_w^n(y)\|_\ell^p \geq \|F_w^n(y^+)\|_\ell^p = \sum_{j \in I_k^+ + n} |a_{j,\ell}\, w_{j-n} \cdots w_{j-1}\, y^+_{j-n}|^p
  \geq C_n^p \, \|y^+\|_k^p \geq \Big(\frac{C_n}{2}\Big)^p.
\]
If $p = 0$, then
\[
\|F_w^n(y)\|_\ell \geq \|F_w^n(y^+)\|_\ell = \sup_{j \in I_k^+ + n} |a_{j,\ell}\, w_{j-n} \cdots w_{j-1}\, y^+_{j-n}|
  \geq C_n \, \|y^+\|_k \geq \frac{C_n}{2}\cdot
\]
In any case, it follows that 
\[
\|F_w^n(x)\|_\ell \to \infty \ \text{ uniformly on } S_k^+ \text{ as } n \to \infty.
\]
Now, take $z \in S_k^-$. If $p \in [1,\infty)$, then
\[
\|F_w^{-n}(z)\|_\ell^p \geq \|F_w^{-n}(z^-)\|_\ell^p 
  = \sum_{j \in I_k^- - n} \Big|\frac{a_{j,\ell}\, z^-_{j+n}}{w_j \cdots w_{j+n-1}}\Big|^p
  \geq D_n^p \, \|z^-\|_k^p \geq \Big(\frac{D_n}{2}\Big)^p.
\]
If $p = 0$, then
\[
\|F_w^{-n}(z)\|_\ell \geq \|F_w^{-n}(z^-)\|_\ell 
  = \sup_{j \in I_k^- - n} \Big|\frac{a_{j,\ell}\, z^-_{j+n}}{w_j \cdots w_{j+n-1}}\Big|
  \geq D_n \, \|z^-\|_k \geq \frac{D_n}{2}\cdot
\]
In any case, we see that 
\[
\|F_w^{-n}(x)\|_\ell \to \infty \ \text{ uniformly on } S_k^- \text{ as } n \to \infty.
\]

If (A) holds, then similar computations show that
\[
\|F_w^n(x)\|_\ell \to \infty \ \text{ uniformly on } S_{\|\cdot\|_k} \text{ as } n \to \infty.
\]

Analogously, if (B) holds, then
\[
\|F_w^{-n}(x)\|_\ell \to \infty \ \text{ uniformly on } S_{\|\cdot\|_k} \text{ as } n \to \infty.
\]

This completes the proof that $F_w$ is uniformly expansive.
\end{proof}

\begin{remark}\label{UEWS-R1}
It is implicit in the proof of Theorem~\ref{UEWS-T1} that properties (A), (B) and (C) are mutually exclusive.
Note also that properties (A), (B) and (C) imply that, for every sufficiently large $k$,
\[
I_k \supset \N, \ \ \ \ I_k \supset -\N \ \ \ \text{ and } \ \ \ I_k = \Z,
\]
respectively. Nevertheless, it may happen that (A) holds and $I_k \neq \Z$ for all $k$.
As an example, consider $a_{j,k} = 1$ if $j > -k$, $a_{j,k} = 0$ if $j \leq -k$ and $w_j = 2$ ($j \in \Z$, $k \in \N$).
Since conditions (\ref{Fw-Defined}) and (\ref{Fw-Invertible}) are true, $F_w$ is an invertible operator on $\lambda_p(A,\Z)$,
which is uniformly expansive because property (A) holds.
A similar example can be defined in the case of property (B).
\end{remark}

\begin{example}\label{UEWS-E1}
If $a_{j,k} = 1$ for all $j \in \Z$ and $k \in \N$, then
\[
\lambda_0(A,\Z) = c_0(\Z) \ \ \text{ and } \ \ \lambda_p(A,\Z) = \ell^p(\Z) \text{ for } p \in [1,\infty).
\]
In this case, Theorem~\ref{UEWS-T1} recovers \cite[Theorem~E(2)]{BerCirDarMesPuj18}.
\end{example}

Let $T$ be a uniformly expansive operator on a normed space $X$. The proof of \cite[Proposition~19(d)]{BerCirDarMesPuj18} 
essentially shows that there exist constants $c > 0$ and $t > 1$ such that we can write $S_X = S^+ \cup S^-$ with
\[
\|T^n y\| \geq c\, t^n \ \text{ and } \ \|T^{-n} z\| \geq c\, t^n \ \text{ for all } n \in \N, \ y \in S^+ \text{ and } z \in S^-.
\]
Hence, the trajectory $(T^n x)_{n \in \Z}$ of any nonzero vector $x \in X$ grows exponentially fast as $n \to \infty$ or as $n \to -\infty$.
We shall now show that things may be different if we go beyond the normed space setting.
More precisely, we will give an example of a uniformly expansive operator on the Fr\'echet space $s(\Z)$ of rapidly decreasing sequences 
whose trajectories have only polynomial (not exponential) growth.

\begin{example}\label{UEWS-E2}
Let $a_{j,k} = (|j| + 1)^k$ for all $j \in \Z$ and $k \in \N$. Then
\[
\lambda_1(A,\Z) = s(\Z),
\]
the {\em space of rapidly decreasing sequences} (on $\Z$). The bilateral forward shift
\[
F : (x_j)_{j \in \Z} \in s(\Z) \mapsto (x_{j-1})_{j \in \Z} \in s(\Z)
\]
is an invertible operator on $s(\Z)$, which is uniformly expansive, since property (C) holds:
\[
\lim_{n \to \infty} \inf_{j \in \N} \frac{a_{j+n,k+1}}{a_{j,k}} 
  = \lim_{n \to \infty} \inf_{j \in \N} \frac{(j+n+1)^{k+1}}{(j+1)^k} \geq \lim_{n \to \infty} n = \infty
\]
and
\[
\lim_{n \to \infty} \inf_{j \in -\N} \frac{a_{j-n,k+1}}{a_{j,k}} 
  = \lim_{n \to \infty} \inf_{j \in -\N} \frac{(n-j+1)^{k+1}}{(-j+1)^k} \geq \lim_{n \to \infty} n = \infty.
\]
For any $k \in \N$, $x = (x_j)_{j \in \Z} \in s(\Z)$ and $n \in \Z$,
\begin{align*}
\|F^n(x)\|_k &= \sum_{j=-\infty}^\infty (|j|+1)^k |x_{j-n}| 
    = \sum_{j=-\infty}^\infty \Big(\frac{|j+n|+1}{|j|+1}\Big)^k (|j|+1)^k |x_j|\\
  &\leq (|n| + 1)^k \sum_{j=-\infty}^\infty (|j|+1)^k |x_j| = (|n| + 1)^k \|x\|_k.
\end{align*}
Thus, the forward trajectory $(F^n(x))_{n \in \N}$ and the backward trajectory $(F^{-n}(x))_{n \in \N}$ 
can grow at most in order $O(n^k)$ with respect to $\|\cdot\|_k$.
\end{example}

\begin{remark}\label{UEWS-R2}
Let $X$ be a locally convex space over $\K$ whose topology is induced by a family $(\|\cdot\|_\alpha)_{\alpha \in I}$ of seminorms.
Recall \cite{BCDFP} that an operator $T \in L(X)$ is said to be {\em positively expansive} if the following condition holds:
\begin{itemize}
\item [(PE)] For each $x \in X \backslash \{0\}$, there exists $\alpha \in I$ such that $\sup_{n \in \N} \|T^n x\|_\alpha = \infty$.
\end{itemize}
Let us say that $T$ is {\em uniformly positively expansive} if:
\begin{itemize}
\item [(UPE)] For each $\alpha \in I$, there exists $\beta \in I$ such that
\[
\|T^n x\|_\beta \to \infty \text{ uniformly on } S_{\|\cdot\|_\alpha} \text{ as } n \to \infty.
\]
\end{itemize}
Clearly, the notions of positive expansivity, average positive expansivity and uniform positive expansivity do not depend on the choice 
of the family of seminorms inducing the topology of $X$. Moreover, as in Proposition~\ref{AE-P1}, we have that
\begin{align*}
T \text{ is uniformly positively expansive } \ &\Longrightarrow \ \ T \text{ is average positively expansive }\\
 &\Longrightarrow \ \ T \text{ is positively expansive}.
\end{align*}
Similar (but simpler) arguments to those used in the proof of Theorem~\ref{UEWS-T1} show that:
\begin{quote}
{\it A bilateral weighted forward shift $F_w$ on a K\"othe sequence space $\lambda_p(A,\Z)$
is uniformly positively expansive exactly when property (A) holds}.
\end{quote}
For unilateral weighted forward shifts, the characterization is analogous:
{\it A unilateral weighted forward shift $F_w$ on a K\"othe sequence space $\lambda_p(A,\N)$
is uniformly positively expansive if and only if, for each $k \in \N$, there exists $\ell \in \N$ such that
\[
\lim_{n \to \infty} \Big(\inf_{j \in I_k} \frac{a_{j+n,\ell} |w_j \cdots w_{j+n-1}|}{a_{j,k}}\Big) = \infty,
\]
where $I_k = \{j \in \N : a_{j,k} \neq 0\}$.}
\end{remark}

\begin{remark}\label{UEWS-R3}
Suppose that a bilateral weighted backward shift $B_w$ is an invertible operator on a K\"othe sequence space $\lambda_p(A,\Z)$.
Since $B_w$ is uniformly expansive if and only if so is $B_w^{-1}$, and since 
\[
B_w^{-1} = F_{w'}, \ \text{ where } w' = \Big(\frac{1}{w_{j+1}}\Big)_{j \in \Z},
\]
it follows from Theorem~\ref{UEWS-T1} that $B_w$ is uniformly expansive if and only if 
one of the following (mutually exclusive) properties holds:
\begin{itemize}
\item [\rm (a)] For each $k \in \N$, there exists $\ell \in \N$ such that
    \[
    \lim_{n \to \infty} \Big(\inf_{j \in I_k} \frac{a_{j-n,\ell} |w_{j-n+1} \cdots w_j|}{a_{j,k}}\Big) = \infty.
    \]
\item [\rm (b)] For each $k \in \N$, there exists $\ell \in \N$ such that
    \[
    \lim_{n \to \infty} \Big(\inf_{j \in I_k} \frac{a_{j+n,\ell}}{a_{j,k} |w_{j+1} \cdots w_{j+n}|}\Big) = \infty.
    \]
\item [\rm (c)] For each $k \in \N$, there exists $\ell \in \N$ such that
    \[
    \lim_{n \to \infty} \Big(\inf_{j \in I_k \cap (-\N)} \frac{a_{j-n,\ell} |w_{j-n+1} \cdots w_j|}{a_{j,k}}\Big) = \infty
    \]
    and
    \[
    \lim_{n \to \infty} \Big(\inf_{j \in I_k \cap \N} \frac{a_{j+n,\ell}}{a_{j,k} |w_{j+1} \cdots w_{j+n}|}\Big) = \infty.
    \]
\end{itemize}
Moreover, (a) is exactly the case in which $B_w$ is uniformly positively expansive.
On the other hand, it is obvious that unilateral weighted backward shifts are never positively expansive.
\end{remark}


\section{Generalities on the various notions of expansivity for operators}\label{S-Gen}

In the sequel, $\cP$ denotes any of the following properties about invertible operators:
expansivity, average expansivity or uniform expansivity.

Recall that a family $(\|\cdot\|_\alpha)_{\alpha \in I}$ of seminorms on a vector space $X$ is said to be {\em directed} if
for every $\alpha,\beta \in I$, there exists $\gamma \in I$ such that
$\|\cdot\|_\alpha \leq \|\cdot\|_\gamma$ and $\|\cdot\|_\beta \leq \|\cdot\|_\gamma$.

\begin{proposition}\label{Gen-P1}
Let $X$ be a locally convex space. For every $T \in GL(X)$, we have that:
\begin{itemize}
\item [\rm (a)] $T$ has property $\cP$ $\Longleftrightarrow$ $T^{-1}$ has property $\cP$.
\item [\rm (b)] $T$ has property $\cP$ $\Longleftrightarrow$ $\lambda T$ has property $\cP$ for some (or all) $\lambda \in S_\K$.
\item [\rm (c)] $T$ has property $\cP$ $\Longleftrightarrow$ $T^k$ has property $\cP$ for some (or all) $k \in \Z \backslash \{0\}$.
\end{itemize}
\end{proposition}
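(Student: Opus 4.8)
The three properties are all phrased through the seminorm quantities $\|T^n x\|_\alpha$, so the plan is to track how these quantities transform under inversion, multiplication by a unimodular scalar, and passage to a power, and then reduce each equivalence to an elementary comparison. Throughout I may assume, since (as recalled after Definition~\ref{AE-D1}) none of the three notions depends on the generating family of seminorms, that $(\|\cdot\|_\alpha)_{\alpha\in I}$ is the directed family of \emph{all} continuous seminorms on $X$. This makes ``$\max$'' of finitely many seminorms available and lets me use, for any continuous operator $S$ and any $\alpha$, a single $\beta\in I$ and $C>0$ with $\|Sz\|_\alpha\le C\|z\|_\beta$ for all $z\in X$.

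For part (a) I would only observe that each defining condition is symmetric in $T$ and $T^{-1}$. In (E) and (AE) the quantities $\sup_{n\in\Z}\|T^n x\|_\alpha$ and $\tfrac{1}{2n+1}\sum_{j=-n}^{n}\|T^jx\|_\alpha$ are invariant under the reindexing $j\mapsto -j$, which is exactly the substitution $T\mapsto T^{-1}$; for (UE) one passes from a decomposition $S_{\|\cdot\|_\alpha}=S^+_\alpha\cup S^-_\alpha$ for $T$ to the same decomposition with the roles of $S^+_\alpha$ and $S^-_\alpha$ interchanged for $T^{-1}$. For part (b) the key identity is $(\lambda T)^n=\lambda^n T^n$ together with $|\lambda^n|=1$ when $\lambda\in S_\K$, whence $\|(\lambda T)^n x\|_\alpha=\|T^n x\|_\alpha$ for every $n\in\Z$. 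Thus every defining condition for $\lambda T$ coincides verbatim with the one for $T$; this proves ``$T$ has $\cP\Rightarrow\lambda T$ has $\cP$ for all $\lambda$'', and the ``some'' version follows by applying this to the operator $\lambda_0 T$ with the unimodular factor $\lambda_0^{-1}$, since $\lambda_0^{-1}(\lambda_0 T)=T$.

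Part (c) is where the work lies. Using part (a) I reduce to a fixed $k\in\N$ and prove the equivalence ``$T$ has $\cP\Leftrightarrow T^k$ has $\cP$'' for each such $k$; the ``some/all'' formulation then follows formally. One direction is always an immediate ``subsequence'' observation: for (E) the multiples of $k$ form a subset of $\Z$, so $\sup_{m}\|T^{km}x\|_\alpha\le\sup_{n}\|T^n x\|_\alpha$ gives ``$T^k$ has $\cP\Rightarrow T$ has $\cP$'', while for (UE) a family diverging uniformly over all $n$ diverges uniformly along $n=km$, giving ``$T$ has $\cP\Rightarrow T^k$ has $\cP$''. The reverse direction is the substantive one and rests on writing $n=kl+i$ with $0\le i<k$ and using continuity of the finitely many operators $T^i$, $|i|<k$: from $T^n x=T^i(T^{kl}x)$ I obtain $\|T^n x\|_\alpha\le C\|T^{kl}x\|_\beta$ for a single $\beta$ and $C$ serving all residues $i$. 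For (E) this yields at once $\sup_n\|T^n x\|_\alpha\le C\sup_m\|(T^k)^m x\|_\beta$, hence ``$T$ has $\cP\Rightarrow T^k$ has $\cP$''; for (UE) the analogous bound with $T^{-i}$ turns uniform divergence along the multiples of $k$ into uniform divergence over all $n$, giving ``$T^k$ has $\cP\Rightarrow T$ has $\cP$''.

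The genuinely delicate case is average expansivity in the direction ``$T$ has $\cP\Rightarrow T^k$ has $\cP$'': a priori the large contributions to $\tfrac{1}{2n+1}\sum_{j=-n}^n\|T^jx\|_\alpha$ could be concentrated on indices $j$ that are not multiples of $k$, so unboundedness of the full Ces\`aro averages need not obviously force unboundedness of the averages over the subsequence $(T^{km}x)_m$. I would resolve this by grouping the sum according to the residue class of $j$: each block of at most $k$ consecutive indices sharing the same quotient $l=\lfloor j/k\rfloor$ contributes at most $kC\|T^{kl}x\|_\beta$, so that $\tfrac{1}{2n+1}\sum_{j=-n}^{n}\|T^jx\|_\alpha\le \tfrac{kC}{2n+1}\sum_{|l|\le m}\|T^{kl}x\|_\beta$ with $m=\lfloor n/k\rfloor+1$. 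Since $\tfrac{k(2m+1)}{2n+1}\to 1$ as $n\to\infty$, the right-hand side is at most a bounded multiple of the Ces\`aro average $\tfrac{1}{2m+1}\sum_{|l|\le m}\|T^{kl}x\|_\beta$ for $T^k$; taking suprema propagates $\infty$ from left to right. The reverse implication ``$T^k$ has $\cP\Rightarrow T$ has $\cP$'' needs no continuity: dropping the nonnegative non-multiple terms and restricting to $n=km$ gives $\tfrac{1}{2km+1}\sum_{j=-km}^{km}\|T^jx\|_\alpha\ge\tfrac{2m+1}{2km+1}\cdot\tfrac{1}{2m+1}\sum_{|l|\le m}\|T^{kl}x\|_\alpha$, and $\tfrac{2m+1}{2km+1}$ stays bounded away from $0$. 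The main obstacle is thus precisely this bookkeeping for the averaged quantity, ensuring the normalizations relating the two index scales stay bounded away from $0$ and $\infty$.
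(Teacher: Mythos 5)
Your proposal is correct and takes essentially the same route as the paper's proof: (a) and (b) are immediate from the definitions, and for (c) you reduce to $k\in\N$ via (a), use continuity of the finitely many powers $T^{r}$, $|r|<k$, to pass between the index scales $n$ and $\lfloor n/k\rfloor$, and treat average expansivity by exactly the same residue-class grouping and Ces\`aro-normalization estimates (your direct argument for ``$T$ average expansive $\Rightarrow T^k$ average expansive'' is just the contrapositive of the paper's). The only cosmetic differences are your use of quotient-remainder with $0\le i<k$ instead of the paper's balanced remainders $|r|<k$, and your appeal to the family of all continuous seminorms rather than a fixed directed family.
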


\begin{proof}
Let $(\|\cdot\|_\alpha)_{\alpha \in I}$ be a directed family of seminorms that induces the topology of $X$.

Since (a) and (b) follow trivially from the definitions, let us prove (c).
In view of (a), we can replace $\Z \backslash \{0\}$ by $\N$ in (c). So, fix $k \in \N$.

If $T$ is uniformly expansive, it is clear that $T^k$ is also uniformly expansive.
Conversely, suppose that $T^k$ is uniformly expansive.
Given $\alpha \in I$, there exists $\beta \in I$ such that we can write $S_{\|\cdot\|_\alpha} = S^+_\alpha \cup S^-_\alpha$, where
\begin{align*}
&\|(T^k)^n x\|_\beta \to \infty \text{ uniformly on } S^+_\alpha \text{ as } n \to \infty,\\
&\|(T^k)^{-n} x\|_\beta \to \infty \text{ uniformly on } S^-_\alpha \text{ as } n \to \infty.
\end{align*}
Since the family $(T^r)_{|r| < k}$ is equicontinuous, there exist $C \in (0,\infty)$ and $\gamma \in I$ such that
\begin{equation}\label{Gen-Eq1}
\|T^r x\|_\beta \leq C \|x\|_\gamma \ \text{ whenever } x \in X \text{ and } |r| < k.
\end{equation}
For each $n \in \N$, write $n = q_n k + r_n$, where $q_n \in \N_0$ and $r_n \in \{0,\ldots,k-1\}$. By (\ref{Gen-Eq1}),
\begin{align*}
&\|T^n x\|_\gamma = \|T^{r_n} (T^k)^{q_n} x\|_\gamma \geq \frac{1}{C}\, \|(T^k)^{q_n} x\|_\beta \to \infty 
  \text{ uniformly on } S^+_\alpha \text{ as } n \to \infty,\\
&\|T^{-n} x\|_\gamma = \|T^{-r_n} (T^k)^{-q_n} x\|_\gamma \geq \frac{1}{C}\, \|(T^k)^{-q_n} x\|_\beta \to \infty 
  \text{ uniformly on } S^-_\alpha \text{ as } n \to \infty.
\end{align*}
Thus, $T$ is also uniformly expansive.

Suppose that $T^k$ is not average expansive, that is, there exists $x \in X \backslash \{0\}$ such that
\[
D_\beta = \sup_{n \in \N} \Big(\frac{1}{2n+1} \sum_{j=-n}^n \|(T^k)^j x\|_\beta \Big) < \infty \ \text{ for all } \beta \in I.
\]
Fix $\beta \in I$ and let $C$ and $\gamma$ be as in (\ref{Gen-Eq1}). 
Write each $n \in \N$ as in the previous paragraph. 
It follows from (\ref{Gen-Eq1}) that
\begin{align*}
\frac{1}{2n+1} \sum_{j=-n}^n \|T^j x\|_\beta &= \frac{1}{2n+1} \sum_{j=-q_n k - r_n}^{q_n k + r_n} \|T^j x\|_\beta
  \leq \frac{2kC}{2n+1} \sum_{i=-q_n}^{q_n} \|(T^k)^i x\|_\gamma\\
&\leq \frac{2k (2q_n + 1)C D_\gamma}{2n+1} = \frac{(4(n - r_n) + 2k)C D_\gamma}{2n+1} \to 2CD_\gamma \text{ as } n \to \infty.
\end{align*}
This proves that $T$ is not average expansive.
Conversely, if $T^k$ is average expansive, it follows easily from the inequality
\[
\frac{1}{2kn+1} \sum_{j=-kn}^{kn} \|T^j x\|_\beta \geq \frac{2n+1}{2kn+1} \cdot \frac{1}{2n+1} \sum_{i=-n}^n \|(T^k)^i x\|_\beta
\]
that $T$ is also average expansive.

The case where $\cP$ denotes the expansivity property is left to the reader.
\end{proof}

\begin{proposition}\label{Gen-P2}
Let $X$ be a locally convex space. If $T \in GL(X)$ has property $\cP$ and $Y$ is a subspace of $X$ with $T(Y) = Y$,
then the restriction $T|_Y \in GL(Y)$ also has property $\cP$.
\end{proposition}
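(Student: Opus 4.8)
The plan is to show that each of the three expansivity conditions, when verified by $T$ on all of $X$, automatically transfers to the restriction $T|_Y$. The unifying idea is that the defining conditions (E), (AE) and (UE) are all phrased in terms of seminorms $\|\cdot\|_\alpha$ and the orbit $(T^n x)_{n}$ of points $x$, and the seminorms on $Y$ are simply the restrictions of those on $X$. So the key observation to establish first is that if $(\|\cdot\|_\alpha)_{\alpha \in I}$ is a family of seminorms inducing the topology of $X$, then the restricted family $(\|\cdot\|_\alpha|_Y)_{\alpha \in I}$ induces the subspace topology on $Y$, and that $(T|_Y)^n = T^n|_Y$ for every $n \in \Z$ (using $T(Y) = Y$, which gives $T^{-1}(Y) = Y$ as well, so $T|_Y \in GL(Y)$ is legitimate). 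Once this is in place, the transfer is almost immediate for each property.

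For the expansivity case, given $x \in Y \setminus \{0\}$, we view $x$ as a nonzero element of $X$; property (E) for $T$ supplies $\alpha \in I$ with $\sup_{n \in \Z}\|T^n x\|_\alpha = \infty$, and since $T^n x = (T|_Y)^n x$ this is exactly condition (E) for $T|_Y$ with the restricted seminorm. The average-expansive case is identical in spirit: condition (AE) for $T$ gives $\alpha$ with
\[
\sup_{n \in \N}\Big(\frac{1}{2n+1}\sum_{j=-n}^{n}\|T^j x\|_\alpha\Big) = \infty,
\]
and again $T^j x = (T|_Y)^j x$ turns this into (AE) for the restriction. For uniform expansivity, given $\alpha \in I$ I would take the $\beta \in I$ furnished by (UE) for $T$ together with the splitting $S_{\|\cdot\|_\alpha} = S^+_\alpha \cup S^-_\alpha$, and then simply intersect with $Y$: set $(S^+_\alpha)_Y = S^+_\alpha \cap Y$ and $(S^-_\alpha)_Y = S^-_\alpha \cap Y$. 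The unit sphere of $\|\cdot\|_\alpha|_Y$ in $Y$ is contained in the unit sphere of $\|\cdot\|_\alpha$ in $X$, so these two sets cover it, and uniform convergence of $\|T^n x\|_\beta$ (resp.\ $\|T^{-n} x\|_\beta$) on $S^+_\alpha$ (resp.\ $S^-_\alpha$) restricts to uniform convergence on the smaller sets.

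The proof is genuinely routine, so I do not anticipate a serious obstacle; the only points requiring a moment of care are the two bookkeeping facts noted above — that the restricted seminorms induce the subspace topology (so that a seminorm family witnessing property $\cP$ on $X$ descends to an admissible one on $Y$) and that powers of $T$ commute with restriction. Both are elementary consequences of $T(Y)=Y$ and continuity, but they are what make the one-line transfer of each condition legitimate. I would state these once at the start and then dispatch the three cases in parallel, remarking that since property $\cP$ is independent of the choice of inducing seminorm family (as noted after Definition~\ref{AE-D1}), there is no loss in working with the restricted family on $Y$.
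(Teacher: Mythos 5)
Your proposal is correct and is exactly the routine verification the paper has in mind: the paper's entire proof reads ``It follows easily from the definitions,'' and your argument simply fills in those details (restricted seminorms induce the subspace topology, $(T|_Y)^n = T^n|_Y$, and each of (E), (AE), (UE) transfers by restricting the witnessing seminorm/splitting to $Y$). No gaps; the bookkeeping points you flag, including that $T(Y)=Y$ with $T$ bijective gives $T^{-1}(Y)=Y$ so that $T|_Y \in GL(Y)$, are precisely what makes the one-line transfer legitimate.
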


\begin{proof}
It follows easily from the definitions.
\end{proof}

\begin{proposition}\label{Gen-P3}
Let $X$ and $Y$ be locally convex spaces, $T \in GL(X)$ and $S \in GL(Y)$.
If there exists a linear homeomorphism $\phi : X \to Y$ such that $\phi \circ T = S \circ \phi$, then
\[
T \text{ has property } \cP \ \ \Longleftrightarrow \ \ S \text{ has property } \cP.
\]
\end{proposition}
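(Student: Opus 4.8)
The plan is to exploit the conjugacy to transport the seminorms of $Y$ to $X$ and then observe that all three conditions (E), (AE) and (UE) are preserved verbatim. First I would fix a family $(\|\cdot\|_\alpha)_{\alpha \in I}$ of seminorms inducing the topology of $Y$ and define, for each $\alpha \in I$, a seminorm on $X$ by $\|x\|'_\alpha = \|\phi(x)\|_\alpha$. Since $\phi$ is a linear homeomorphism, the family $(\|\cdot\|'_\alpha)_{\alpha \in I}$ induces the topology of $X$. Because the three notions of expansivity are independent of the choice of the inducing family (as recorded just after Definition~\ref{AE-D1}), it suffices to verify the defining conditions for $T$ using $(\|\cdot\|'_\alpha)_{\alpha \in I}$ and for $S$ using $(\|\cdot\|_\alpha)_{\alpha \in I}$.

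Next I would iterate the intertwining relation. From $\phi \circ T = S \circ \phi$ together with the invertibility of $\phi$, $T$ and $S$, one gets $\phi \circ T^n = S^n \circ \phi$ for every $n \in \Z$; applying $\|\cdot\|_\alpha$ then yields the key identity
\[
\|T^n x\|'_\alpha = \|S^n \phi(x)\|_\alpha \qquad (x \in X, \ \alpha \in I, \ n \in \Z).
\]
Since $\phi$ is a bijection, $x \in X \backslash \{0\}$ if and only if $\phi(x) \in Y \backslash \{0\}$, and $x$ ranges over $X \backslash \{0\}$ exactly as $\phi(x)$ ranges over $Y \backslash \{0\}$.

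With this identity in hand, each property transfers immediately. For expansivity, $\sup_{n \in \Z} \|T^n x\|'_\alpha = \sup_{n \in \Z} \|S^n \phi(x)\|_\alpha$, so condition (E) holds for $T$ with respect to the primed seminorms precisely when it holds for $S$. For average expansivity the same substitution turns the averages $\frac{1}{2n+1}\sum_{j=-n}^n \|T^j x\|'_\alpha$ into $\frac{1}{2n+1}\sum_{j=-n}^n \|S^j \phi(x)\|_\alpha$, so condition (AE) is likewise equivalent. For uniform expansivity I would note that the unit sphere satisfies $S_{\|\cdot\|'_\alpha} = \phi^{-1}(S_{\|\cdot\|_\alpha})$; thus any decomposition $S_{\|\cdot\|_\alpha} = S^+_\alpha \cup S^-_\alpha$ on the $Y$-side pulls back under $\phi^{-1}$ to a decomposition of $S_{\|\cdot\|'_\alpha}$, and by the displayed identity the uniform convergence $\|S^n y\|_\beta \to \infty$ on $S^+_\alpha$ (resp.\ $\|S^{-n} y\|_\beta \to \infty$ on $S^-_\alpha$) is exactly the uniform convergence of $\|T^n x\|'_\beta$ (resp.\ $\|T^{-n} x\|'_\beta$) on the pulled-back sets. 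Since both seminorm families are indexed by the same set $I$, the $\alpha \mapsto \beta$ correspondence is preserved, so (UE) holds for $T$ if and only if it holds for $S$.

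There is no real obstacle here: the argument is a routine conjugacy computation, and the only point requiring a little care is the bookkeeping for uniform expansivity, namely checking that the sphere decomposition and the uniform limits correspond cleanly under $\phi$. Invoking the seminorm-independence of all three properties then completes the proof.
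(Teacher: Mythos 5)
Your proposal is correct, but it takes a genuinely different route from the paper. You pull the seminorms of $Y$ back to $X$ via $\|x\|'_\alpha = \|\phi(x)\|_\alpha$, observe that this family induces the topology of $X$, and then invoke the remark following Definition~\ref{AE-D1} (seminorm-independence of all three notions) so that the identity $\|T^n x\|'_\alpha = \|S^n \phi(x)\|_\alpha$, together with the sphere identification $S_{\|\cdot\|'_\alpha} = \phi^{-1}(S_{\|\cdot\|_\alpha})$, makes conditions (E), (AE) and (UE) transfer verbatim. The paper instead works with two arbitrary directed families of seminorms on $X$ and $Y$, and proves the uniform expansivity case by hand: continuity of $\phi$ and $\phi^{-1}$ yields constants $C, C'$ and indices $\alpha, \beta'$ with $|\phi(x)|_\beta \leq C\|x\|_\alpha$ and $\|\phi^{-1}(y)\|_{\alpha'} \leq C'|y|_{\beta'}$, and the sphere decomposition on the $Y$-side is built by normalizing, sending $y \in S_{|\cdot|_\beta}$ to $\phi^{-1}(y)/\|\phi^{-1}(y)\|_\alpha$ and testing membership in $S^{\pm}_\alpha$. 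Your reduction is shorter and more conceptual, but it leans on the independence-of-seminorms fact, which the paper asserts (citing \cite{BCDFP} for (E) and (UE)) rather than re-proves; the paper's argument is self-contained, and the estimates it carries out are essentially the content of that independence fact specialized to the conjugacy at hand. Both are complete proofs; the only point worth making explicit in yours is the one you already flag, namely that uniform convergence of $\|S^n y\|_\beta$ on a subset of $S_{|\cdot|_\beta}$ is an infimum statement that is preserved exactly under the bijection $\phi^{-1}$, since $\inf_{x \in \phi^{-1}(S^+_\alpha)} \|T^n x\|'_\beta = \inf_{y \in S^+_\alpha} \|S^n y\|_\beta$.
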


\begin{proof}
Let $(\|\cdot\|_\alpha)_{\alpha \in I}$ (resp.\ $(|\cdot|_\beta)_{\beta \in J}$) be a directed family of seminorms 
that induces the topology of $X$ (resp.\ $Y$).
Without loss of generality, we may assume that the sets $I$ and $J$ are disjoint.
By symmetry, it is enough to prove the implication $\Rightarrow$.
We shall give the proof in the case $\cP$ means uniform expansivity. 
The other cases are simpler and are left to the reader.
So, suppose that $T$ is uniformly expansive.
Take $\beta \in J$. By the continuity of $\phi$, there exist $C \in (0,\infty)$ and $\alpha \in I$ such that
\begin{equation}\label{Gen-Eq2}
|\phi(x)|_\beta \leq C \|x\|_\alpha \ \text{ for all } x \in X.
\end{equation}
By the definition of uniform expansivity, there exists $\alpha' \in I$ such that 
we can write $S_{\|\cdot\|_\alpha} = S^+_\alpha \cup S^-_\alpha$, where
\begin{align}
&\|T^n x\|_{\alpha'} \to \infty \text{ uniformly on } S^+_\alpha \text{ as } n \to \infty, \label{Gen-Eq3} \\
&\|T^{-n} x\|_{\alpha'} \to \infty \text{ uniformly on } S^-_\alpha \text{ as } n \to \infty. \label{Gen-Eq4}
\end{align}
By the continuity of $\phi^{-1}$, there exist $C' \in (0,\infty)$ and $\beta' \in J$ such that
\begin{equation}\label{Gen-Eq5}
\|\phi^{-1}(y)\|_{\alpha'} \leq C' |y|_{\beta'} \ \text{ for all } y \in Y.
\end{equation}
It follows from (\ref{Gen-Eq2}) that
\begin{equation}\label{Gen-Eq6}
\|\phi^{-1}(y)\|_\alpha \geq \frac{1}{C} \ \text{ whenever } y \in S_{|\cdot|_\beta}.
\end{equation}
Thus, we can define the sets
\[
S^+_\beta = \Big\{y \in S_{|\cdot|_\beta} : \frac{\phi^{-1}(y)}{\|\phi^{-1}(y)\|_\alpha} \in S^+_\alpha\Big\} \ \text{ and } \
S^-_\beta = \Big\{y \in S_{|\cdot|_\beta} : \frac{\phi^{-1}(y)}{\|\phi^{-1}(y)\|_\alpha} \in S^-_\alpha\Big\}.
\]
Clearly, $S_{|\cdot|_\beta} = S^+_\beta \cup S^-_\beta$. By (\ref{Gen-Eq5}), (\ref{Gen-Eq6}) and (\ref{Gen-Eq3}),
\begin{align*}
|S^n y|_{\beta'} &\geq \frac{1}{C'} \|\phi^{-1}(S^n y)\|_{\alpha'} = \frac{1}{C'} \|T^n(\phi^{-1}(y))\|_{\alpha'}\\
  &\geq \frac{1}{C C'} \Big\|T^n\Big(\frac{\phi^{-1}(y)}{\|\phi^{-1}(y)\|_\alpha}\Big)\Big\|_{\alpha'}
  \to \infty \text{ uniformly on } S^+_\beta \text{ as } n \to \infty.
\end{align*}
Similarly, by (\ref{Gen-Eq5}), (\ref{Gen-Eq6}) and (\ref{Gen-Eq4}),
\begin{align*}
|S^{-n} y|_{\beta'} &\geq \frac{1}{C'} \|\phi^{-1}(S^{-n} y)\|_{\alpha'} = \frac{1}{C'} \|T^{-n}(\phi^{-1}(y))\|_{\alpha'}\\
  &\geq \frac{1}{C C'} \Big\|T^{-n}\Big(\frac{\phi^{-1}(y)}{\|\phi^{-1}(y)\|_\alpha}\Big)\Big\|_{\alpha'}
  \to \infty \text{ uniformly on } S^-_\beta \text{ as } n \to \infty.
\end{align*}
This proves that $S$ is uniformly expansive.
\end{proof}

\begin{proposition}\label{Gen-P4}
Let $X = \prod_{\ell \in L} X_\ell$ be the product of a family $(X_\ell)_{\ell \in L}$ of locally convex spaces and 
let $Y = \bigoplus_{\ell \in L} X_\ell$ be the external direct sum of this family 
(i.e., the set of all $(x_\ell)_{\ell \in L} \in X$ such that $x_\ell = 0$ except for a finite number of indices)
regarded as a subspace of $X$ (i.e., endowed with the induced topology).
Let $T : X \to X$ be the product operator
\[
T((x_\ell)_{\ell \in L}) = (T_\ell x_\ell)_{\ell \in L},
\]
where $T_\ell \in GL(X_\ell)$ for each $\ell \in X_\ell$. Then, the following assertions are equivalent:
\begin{itemize}
\item [\rm (i)] $T$ has property $\cP$;
\item [\rm (ii)] $T|_Y$ has property $\cP$;
\item [\rm (iii)] $T_\ell$ has property $\cP$ for all $\ell \in L$.
\end{itemize}
\end{proposition}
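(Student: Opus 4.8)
The plan is to prove the cycle of implications $(\mathrm{i}) \Rightarrow (\mathrm{ii}) \Rightarrow (\mathrm{iii}) \Rightarrow (\mathrm{i})$, with the last one being where essentially all the work lies. For the first implication I would note that, since each $T_\ell$ is invertible, $T$ maps finitely supported sequences bijectively onto finitely supported sequences, so $T(Y) = Y$ and $T|_Y \in GL(Y)$; then $(\mathrm{i}) \Rightarrow (\mathrm{ii})$ is immediate from Proposition~\ref{Gen-P2}. For $(\mathrm{ii}) \Rightarrow (\mathrm{iii})$, I would fix $\ell \in L$ and consider the subspace $Y_\ell = \{(x_m)_{m \in L} \in Y : x_m = 0 \text{ for } m \neq \ell\}$, which satisfies $(T|_Y)(Y_\ell) = Y_\ell$. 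The canonical map $\phi_\ell : X_\ell \to Y_\ell$ placing $x_\ell$ in the $\ell$-th slot is a linear homeomorphism with $\phi_\ell \circ T_\ell = (T|_{Y_\ell}) \circ \phi_\ell$, so Proposition~\ref{Gen-P2} (applied to $T|_Y$ and the subspace $Y_\ell$) followed by Proposition~\ref{Gen-P3} yields that $T_\ell$ has property $\cP$.

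For $(\mathrm{iii}) \Rightarrow (\mathrm{i})$ I would first fix, for each $\ell$, a family $(\|\cdot\|^{(\ell)}_\alpha)_{\alpha \in I_\ell}$ of seminorms inducing the topology of $X_\ell$. Since the notions of expansivity are independent of the generating family, I would verify $\cP$ for $X$ using the canonical product seminorms $p_{F,(\alpha_\ell)}(x) = \max_{\ell \in F} \|x_\ell\|^{(\ell)}_{\alpha_\ell}$, indexed by finite $F \subset L$ and choices $\alpha_\ell \in I_\ell$; the decisive feature to exploit is that every such seminorm depends on only finitely many coordinates. When $\cP$ is expansivity or average expansivity the argument is short: given $x = (x_\ell) \neq 0$, choose $\ell_0$ with $x_{\ell_0} \neq 0$, apply the corresponding property of $T_{\ell_0}$ to get an index $\alpha$ making the relevant supremum of $\|T_{\ell_0}^n x_{\ell_0}\|^{(\ell_0)}_\alpha$ (resp.\ of its Cesàro averages) infinite, and observe that the single-coordinate seminorm $p_{\{\ell_0\},(\alpha)}$ witnesses (E) (resp.\ (AE)) for $x$, because $p_{\{\ell_0\},(\alpha)}(T^n x) = \|T_{\ell_0}^n x_{\ell_0}\|^{(\ell_0)}_\alpha$.

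The main difficulty is the uniform expansivity case, where uniformity must be controlled over the unit sphere of a seminorm involving several coordinates. Given $p = p_{F,(\alpha_\ell)}$, I would use the uniform expansivity of each $T_\ell$ ($\ell \in F$) to pick $\beta_\ell \in I_\ell$ and a splitting $S_{\|\cdot\|^{(\ell)}_{\alpha_\ell}} = S^{+,\ell} \cup S^{-,\ell}$ with $\|T_\ell^n z\|^{(\ell)}_{\beta_\ell} \to \infty$ uniformly on $S^{+,\ell}$ and $\|T_\ell^{-n} z\|^{(\ell)}_{\beta_\ell} \to \infty$ uniformly on $S^{-,\ell}$, and set $q = p_{F,(\beta_\ell)}$. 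For $x \in S_p$ the set $M(x) = \{\ell \in F : \|x_\ell\|^{(\ell)}_{\alpha_\ell} = 1\}$ is nonempty, and each $x_\ell$ with $\ell \in M(x)$ lies in $S^{+,\ell} \cup S^{-,\ell}$; I would then let $S^+_p$ (resp.\ $S^-_p$) consist of those $x \in S_p$ for which some $\ell \in M(x)$ has $x_\ell \in S^{+,\ell}$ (resp.\ $x_\ell \in S^{-,\ell}$), giving $S_p = S^+_p \cup S^-_p$. Because $F$ is finite, the quantities $C_n = \min_{\ell \in F} \inf_{z \in S^{+,\ell}} \|T_\ell^n z\|^{(\ell)}_{\beta_\ell}$ and $D_n = \min_{\ell \in F} \inf_{z \in S^{-,\ell}} \|T_\ell^{-n} z\|^{(\ell)}_{\beta_\ell}$ (with the convention that an infimum over the empty set is $\infty$) tend to $\infty$, and for $x \in S^+_p$ the choice of $\ell \in M(x)$ with $x_\ell \in S^{+,\ell}$ gives $q(T^n x) \geq \|T_\ell^n x_\ell\|^{(\ell)}_{\beta_\ell} \geq C_n$, with the symmetric estimate $q(T^{-n} x) \geq D_n$ on $S^-_p$.

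The step I expect to require the most care is precisely this passage to the minimum over $F$: it is the finiteness of the support of each product seminorm—forced by the product topology—that allows the individual uniform divergences to be amalgamated into a single uniform divergence for $q$, and I would emphasize that this is exactly where the argument would break down for seminorms depending on infinitely many coordinates. Once the uniform expansivity case is settled, the cycle $(\mathrm{i}) \Rightarrow (\mathrm{ii}) \Rightarrow (\mathrm{iii}) \Rightarrow (\mathrm{i})$ closes and establishes the three-way equivalence for all choices of $\cP$.
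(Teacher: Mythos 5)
Your proof is correct and follows essentially the same route as the paper's: (i)$\Rightarrow$(ii) via Proposition~\ref{Gen-P2}, (ii)$\Rightarrow$(iii) via the canonical embeddings together with Propositions~\ref{Gen-P2} and~\ref{Gen-P3}, and (iii)$\Rightarrow$(i) by working with the finitely supported product seminorms and splitting the unit sphere $S_p$ according to which coordinate attains the maximum and whether that coordinate lies in $S^{+,\ell}$ or $S^{-,\ell}$ --- your sets $S^{\pm}_p$ and the minima $C_n$, $D_n$ over the finite index set $F$ coincide with the paper's $Z_i^{\pm}$ decomposition and $\min_{1\leq i\leq k} C^{\pm}_{i,n}$. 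The only difference is cosmetic: you spell out the expansivity and average-expansivity cases (via a single-coordinate seminorm) that the paper leaves to the reader.
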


\begin{proof}
For each $\ell \in L$, let $(\|\cdot\|_\alpha)_{\alpha \in I_\ell}$ be a directed family of seminorms that induces the topology of $X_\ell$.
To avoid any ambiguity with the notation $\|\cdot\|_\alpha$, we assume that the sets $I_\ell$ are pairwise disjoint.
Recall that the product topology on $X$ is induced by the family of seminorms given by
\[
\vertiii{(x_\ell)_{\ell \in L}}_{\alpha_1,\ldots,\alpha_k} = \max\{\|x_{\ell_1}\|_{\alpha_1},\ldots,\|x_{\ell_k}\|_{\alpha_k}\},
\]
for $k \in \N$, $\ell_1,\ldots,\ell_k \in L$ and $\alpha_1 \in I_{\ell_1},\ldots,\alpha_k \in I_{\ell_k}$.

\smallskip\noindent
(i) $\Rightarrow$ (ii): It follows from Proposition~\ref{Gen-P2}.

\smallskip\noindent
(ii) $\Rightarrow$ (iii): For each $\ell \in L$, the {\em canonical embedding} $\phi_\ell : X_\ell \to Y$ is the function that maps each 
$z \in X_\ell$ to the family $(x_j)_{j \in L} \in Y$ whose $\ell^\text{th}$ coordinate is $z$ and the other coordinates are $0$.
Consider the subspace $\wt{X}_\ell = \phi_\ell(X_\ell)$ of $Y$.
Clearly, $\phi_\ell : X_\ell \to \wt{X}_\ell$ is a linear homeomorphism, 
\[
T(\wt{X}_\ell) = \wt{X}_\ell \ \ \text{ and } \ \ \phi_\ell \circ T_\ell = T|_{\wt{X}_\ell} \circ \phi_\ell.
\]
Hence, (iii) follows from Propositions~\ref{Gen-P2} and~\ref{Gen-P3}.

\smallskip\noindent
(iii) $\Rightarrow$ (i): We shall give the proof in the case $\cP$ means uniform expansivity. 
The other cases are simpler and are left to the reader. So, suppose that $T_\ell$ is uniformly expansive for all $\ell \in L$.
Consider a seminorm $\vertiii{\cdot}_{\alpha_1,\ldots,\alpha_k}$ of $X$, where
$k \in \N$, $\ell_1,\ldots,\ell_k \in L$ and $\alpha_1 \in I_{\ell_1},\ldots,\alpha_k \in I_{\ell_k}$.
For each $i \in \{1,\ldots,k\}$, the uniform expansivity of $T_{\ell_i}$ gives us an index $\beta_i \in I_{\ell_i}$ such that 
we can write $S_{\|\cdot\|_{\alpha_i}} = S^{+}_{\alpha_i} \cup S^{-}_{\alpha_i}$, where
\[
C^{+}_{i,n} = \inf_{z \in S^{+}_{\alpha_i}} \|(T_{\ell_i})^n z\|_{\beta_i} \to \infty \ \text{ and } \ 
C^{-}_{i,n} = \inf_{z \in S^{-}_{\alpha_i}} \|(T_{\ell_i})^{-n} z\|_{\beta_i} \to \infty \ \text{ as } n \to \infty.
\]
Let
\[
Z_i = \{x \in S_{\vertiii{\cdot}_{\alpha_1,\ldots,\alpha_k}} : \|x_{\ell_i}\|_{\alpha_i} = 1\}.
\]
Since $x \in S_{\vertiii{\cdot}_{\alpha_1,\ldots,\alpha_k}}$ implies $\|x_{\ell_i}\|_{\alpha_i} = 1$ for some $i \in \{1,\ldots,k\}$, we have that
\[
S_{\vertiii{\cdot}_{\alpha_1,\ldots,\alpha_k}} = Z_1 \cup \ldots \cup Z_k.
\]
Let $Z_i^+ = \{x \in Z_i : x_{\ell_i} \in S^{+}_{\alpha_i}\}$ and $Z_i^- = \{x \in Z_i : x_{\ell_i} \in S^{-}_{\alpha_i}\}$.
Clearly, $Z_i = Z_i^+ \cup Z_i^-$. Now, define
\[
S^{+}_{\alpha_1,\ldots,\alpha_k} = Z_1^+ \cup \ldots \cup Z_k^+ \ \ \text{ and } \ \
S^{-}_{\alpha_1,\ldots,\alpha_k} = Z_1^- \cup \ldots \cup Z_k^-.
\]
Note that
\[
S_{\vertiii{\cdot}_{\alpha_1,\ldots,\alpha_k}} = S^{+}_{\alpha_1,\ldots,\alpha_k} \cup S^{-}_{\alpha_1,\ldots,\alpha_k}.
\]
If $x \in S^{+}_{\alpha_1,\ldots,\alpha_k}$, then $x_{\ell_j} \in S^{+}_{\alpha_j}$ for some $j \in \{1,\ldots,k\}$, 
and so $\|(T_{\ell_j})^n(x_{\ell_j})\|_{\beta_j} \geq C^{+}_{j,n}$. Thus,
\begin{align*}
\inf_{x \in S^{+}_{\alpha_1,\ldots,\alpha_k}} \vertiii{T^n(x)}_{\beta_1,\ldots,\beta_k}
  &= \inf_{x \in S^{+}_{\alpha_1,\ldots,\alpha_k}} \max_{1 \leq i \leq k} \|(T_{\ell_i})^n(x_{\ell_i})\|_{\beta_i}\\
  &\geq \min_{1 \leq i \leq k} C^{+}_{i,n} \to \infty \ \text{ as } n \to \infty.
\end{align*}
Similarly,
\begin{align*}
\inf_{x \in S^{-}_{\alpha_1,\ldots,\alpha_k}} \vertiii{T^{-n}(x)}_{\beta_1,\ldots,\beta_k}
  &= \inf_{x \in S^{-}_{\alpha_1,\ldots,\alpha_k}} \max_{1 \leq i \leq k} \|(T_{\ell_i})^{-n}(x_{\ell_i})\|_{\beta_i}\\
  &\geq \min_{1 \leq i \leq k} C^{-}_{i,n} \to \infty \ \text{ as } n \to \infty.
\end{align*}
This proves that $T$ is uniformly expansive.
\end{proof}

\begin{corollary}
Let $X$ be a locally convex space and let $T \in GL(X)$.
If $X = X_1 \oplus \cdots \oplus X_m$ is a topological direct sum of $T$-invariant subspaces, then
\[
T \text{ has property } \cP \ \Longleftrightarrow \ T|_{X_i} \text{ has property } \cP \text{ for all } i \in \{1,\ldots,m\}.
\]
\end{corollary}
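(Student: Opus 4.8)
The plan is to exhibit $T$ as linearly conjugate to a finite product operator and then read off the result from Propositions~\ref{Gen-P3} and~\ref{Gen-P4}. Since the index set $\{1,\ldots,m\}$ is finite, the product $\prod_{i=1}^m X_i$ and the external direct sum $\bigoplus_{i=1}^m X_i$ coincide as locally convex spaces, so Proposition~\ref{Gen-P4} applies to either description.

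First I would record the basic structural facts. The assumption that $X = X_1 \oplus \cdots \oplus X_m$ is a \emph{topological} direct sum means precisely that the addition map
\[
\phi : (x_1,\ldots,x_m) \in \prod_{i=1}^m X_i \longmapsto x_1 + \cdots + x_m \in X
\]
is a linear homeomorphism. Next I would check that each restriction $T|_{X_i}$ lies in $GL(X_i)$: the hypothesis of $T$-invariance gives $T(X_i) \subseteq X_i$; since $T \in GL(X)$ is a bijection and the decomposition of each vector is unique, a short argument (if $T(x) = y \in X_i$, write $x = x_1 + \cdots + x_m$ and use injectivity of $T$ to force $x = x_i$) shows $T(X_i) = X_i$, while $T^{-1}|_{X_i} = (T|_{X_i})^{-1}$ is continuous because $T^{-1}$ is. Thus $T_i := T|_{X_i} \in GL(X_i)$ for every $i$.

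With these in hand, I would form the product operator $\widehat{T} = \prod_{i=1}^m T_i$ on $\prod_{i=1}^m X_i$. Because each $X_i$ is $T$-invariant and $\phi$ is coordinatewise addition, one has $\phi \circ \widehat{T} = T \circ \phi$, so $\phi$ is a conjugacy. Proposition~\ref{Gen-P3} then yields that $T$ has property $\cP$ if and only if $\widehat{T}$ has property $\cP$. Finally, applying the equivalence (i) $\Leftrightarrow$ (iii) of Proposition~\ref{Gen-P4} to $\widehat{T}$ over the finite index set $L = \{1,\ldots,m\}$ shows that $\widehat{T}$ has property $\cP$ if and only if each $T_i = T|_{X_i}$ has property $\cP$. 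Chaining the two equivalences gives the corollary.

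The argument is entirely a matter of assembling earlier results, so I do not expect any genuine obstacle. The only points needing a moment of care are the verification that $T(X_i) = X_i$ (so that the restrictions are actually invertible, as required even to speak of property $\cP$ for them) and the identification of the finite external direct sum with the product; both are routine.
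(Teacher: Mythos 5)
Your proposal is correct and follows exactly the route the paper intends: its proof of this corollary is simply ``It follows immediately from the previous proposition,'' i.e., reduce to Proposition~\ref{Gen-P4} by identifying the finite topological direct sum with the product via the addition map. Your write-up merely fills in the routine details (the conjugacy through Proposition~\ref{Gen-P3} and the verification that $T(X_i)=X_i$, so that $T|_{X_i}\in GL(X_i)$), which the paper leaves implicit.
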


\begin{proof}
It follows immediately from the previous proposition.
\end{proof}


\section*{Acknowledgement}

The first author is beneficiary of a grant within the framework of the grants for the retraining, modality Mar\'ia Zambrano, 
in the Spanish university system (Spanish Ministry of Universities, financed by the European Union, NextGenerationEU).
The first author was also partially supported by CNPq -- Project {\#}308238/2021-4 and by CAPES -- Finance Code 001.
The three authors were supported by project PID2022-139449NB-I00, funded by MCIN/AEI/10.13039/501100011033/FEDER, UE.
The second and third authors were also supported by Generalitat Valenciana, Project PROMETEU/2021/070.


\end{document}